\documentclass[preprint,12pt]{article}
\usepackage{amsmath,amsthm,amssymb,latexsym}
\usepackage{doc}
\usepackage{color}
\usepackage{makeidx}
\usepackage{hyperref}
\usepackage{url}
\usepackage[dvips]{graphicx}

\textwidth 15cm

\newtheorem{thm}{Theorem}[section]
\newtheorem{cor}[thm]{Corollary}
\newtheorem{lemma}[thm]{Lemma}
\newtheorem{prop}[thm]{Proposition}

\makeindex

\begin{document}

\begin{center}
{\LARGE{Contrast in Greyscales of Graphs}} \vspace*{.5cm}

{\sc  Natalia de Castro\footnote[1]{Dpto. de Matem\'atica Aplicada I, Universidad de Sevilla, Avda.
Reina Mercedes s/n, Sevilla, SPAIN, e-mail: natalia@us.es, vizuete@us.es, rafarob@us.es }
 Mar\'{\i}a A. Garrido-Vizuete$^1$, Rafael Robles$^1$,
  Mar\'{\i}a Trinidad Villar-Li\~n\'an\footnote[2]{Dpto. de Geometr\'{\i}a y Topolog\'{\i}a, Facultad de Matem\'aticas,
Universidad de Sevilla, C/ Tarfia s/n 41012, Sevilla,
SPAIN, villar@us.es} \\
}

\end{center}

\graphicspath{{Figures/}}

\begin{abstract}
\noindent In this work we present the notion of greyscale of a graph as a colouring of its vertices that uses colours from the real interval [0,1]. Any greyscale induces another colouring by assigning to each edge the non-negative difference between the colours of its vertices. These edge colours are ordered in lexicographical increasing  ordering and gives rise to a new element of the graph: the contrast vector. We introduce the notion of maximum contrast vector (in the set of contrast vectors of all possible greyscales defined on the graph) as a new invariant for the graph. The relation between finding the maximum contrast vector for the graph and its chromatic number is established. Thus the maximum contrast problem is an NP-complete problem. However, the set of values of any maximum contrast greyscale for any graph is bounded by a finite set which is given. Several methods to compute the maximum contrast vector with some restrictions  are collected in this paper.

The interest of these new concepts lies in their possible applications for solving problems of engineering, physics and applied mathematics which are modeled according to a network whose nodes have assigned numerical values of a certain parameter delimited by a range of real numbers. The objective is to maximize the differences between each node and its neighbors, from a local and global point of view simultaneously  through a vectorial objective function, that is the contrast vector.

% A {\it greyscale} $f$ of a graph $G(V,E)$ is a mapping from $V$ to the interval $[0,1]$ such
%that $\{0, 1\} \subseteq Im(f)$. This function $f$ induces another
%mapping $\widehat{f}$ on $E$ by assigning to each edge the non-negative difference of %the values of
%$f$ on its vertices. The {\it contrast vector} $cont(G,f)$ is defined as the vector %$(\widehat{f}(e_1),
%\widehat{f}(e_2), \ldots, \widehat{f}(e_m))$ for all edges $e_i$ of $G$ in such a way that
%$\widehat{f}(e_i) \leq \widehat{f}(e_{i+1})$  for $i= 1, 2, \dots, m-1$. The concept of %{\it  maximum
%contrast vector} is  presented by using the lexicographical ordering in the set of %contrast vectors
%of all possible greyscales defined on $G$ and a greyscale that gives rise to  a maximum %contrast vector is named {\it maximum contrast greyscale}.

%The relation between finding the maximum contrast vector for the graph
%$G$ and the chromatic number of $G$ is established.
 %Thus the maximum contrast problem is an NP-complete problem.  
%However, the set of values of any maximum contrast greyscale  for any graph is bounded %by  a finite set which is given. Several 
%methods to compute the maximum contrast vector with some restrictions in 
 %are collected in this paper.
\end{abstract}

\vspace*{0.4cm}
{\bf Keywords:} graph colouring, greyscale, maximum contrast, NP-completeness. 

\vspace*{0.4cm}
\textbf{MSC 2010 (Primary):} 05C, 68R.    \textbf{MSC 2010 (Secondary):} 05C15, 05C85,  90C47.

\section{Introduction}

Graph colouring is one of the most studied problems of combinatorial optimization because it has a wide variety of applications such as wiring of printed circuits~\cite{clw-odmwi-92}, resource allocation~\cite{wsn-radpbnugca-91}, frequency assignment problem~\cite{ahkms-mp-07, gk-glvws-09, omgh-bsgc-16}, a wide variety of scheduling problems~\cite{m-gcpas-04} or computer
register allocation~\cite{cacchm-rac-81}.\par

A variety of combinatorial optimization problems on graphs can be formulated  similarly in the
following way. Given a graph $G(V,E)$, a mapping $f:V\longrightarrow \mathbb{Z}$ is defined
and it induces a new mapping $\widehat f:E\longrightarrow \mathbb{Z}$ by $\widehat
f(e)=|f(u)-f(v)|$ for every $e=\{u,v\}\in E$. Then an optimization problem is formulated from
several key elements: mappings $f$ belonging to a  subset $S$, the image of $V$ by $f$
and the image of $E$ by $\widehat f$. In particular, the classic graph colouring problem, that is,
colouring the vertices of $G$ with as few colours as possible so that adjacent vertices always have
different colours, can be stated in these terms as follows:
$$\chi(G)=\min_{f\in S} |f(V)|\ \ \text{where}\ \ S=\{ f:V\rightarrow \mathbb{Z} \text{ such that } 0\notin \widehat f(E)\}. $$
It is well known that this minimum number $\chi(G)$ is called the chromatic  number of the graph
$G$ and that its computing is an NP-hard problem~\cite{k-rcp-72}.

It must be stressed that the classic graph colouring problem takes into  account the number of
colours used but not what they are. However, there are several works related to map colouring for
which the nature of the colours is essential, whereas the number of them is fixed. The {\it maximum
differential graph colouring problem} ~\cite{hkv-mdgc-11}, or equivalently the {\it antibandwidth
problem}~\cite{lvw-svbmp-84}, colours the vertices of the graph in order to maximize the smallest
colour difference between adjacent vertices and using  all the colours $1,2,\ldots,|V|$. Under the
above formulation, these problems are posed as follows:
$$\max_{f\in S}\min \widehat f(E)\ \ \text{ for } S=\{ f:V\rightarrow \mathbb{Z} \text{ such that }  f(V)=\{1,2,\dots,|V|\}\},$$
and therefore the complementary optimization case, the {\it bandwidth problem}, is given by
$$\min_{f\in S}\max \widehat f(E)\ \ \text{ for } S=\{ f:V\rightarrow \mathbb{Z} \text{ such that }  f(V)=\{1,2,\dots,|V|\}\}.$$
Note that these problems are concerned with the optimization of a scalar function and mappings that
take values within a discrete spectrum. Dillencourt et al.~\cite{deg-ccggcse-07} studied a variation of
the differential graph colouring problem under the assumption that all colours in the colour
spectrum are available, more precisely, the space of colours is a three dimensional subset of
$\mathbb{R}^3$. This makes the problem continuous rather than discrete since the mapping $f$ has
image in $\mathbb{R}^3$ (see ~\cite{deg-ccggcse-07} for details).

\par

Other colouring problems are included in  {\it the examination scheduling problem} category (see for instance
\cite{b-esmmsst-98}). This problem consists of
assigning a number of exams to a number of potential time  periods or slots within the examination
period  taking into account that no student can take  two or more exams at the same time.
The graph $G$ associated to
the examination scheduling problem has a vertex for each exam and two vertices are adjacent
whenever there is at least one  student taking their corresponding exams. This way, the chromatic
number of $G$ provides the minimum number of slots needed to generate an examination period
schedule.

When a weight $w_i$ is associated to  each  colour $i$ in a proper colouring of $G$, and the sum of that colour weights is minimized,  the  optimization problem is known to be the {\it minimum sum colouring problem} whose applications to  scheduling  problems and resource allocations are recently  developed (see \cite{llc-mscpubcs-17} and the references therein).

In all these approaches the colouring functions are considered to be  scalar mappings.
In this work, we present an alternative method that can solve colouring problems but focussing in the optimization of
a vector that measures the differences of colours between adjacent vertices in the graph.

Thus, different concepts related to the colouring of a graph $G=G(V,E)$ are accurately introduced
in Section \ref{sec:prelimi}: the {\it contrast} and the {\it gradation} associated to a greyscale
of $G$. Namely, given the graph $G$, a greyscale is a mapping that associates a value  from the interval $[0,1]$ to each vertex
$v\in V$. This assignment can be understood as an extension of the colouring of the vertices of $G$
with  grey tones.  For the  contrast problem, the objective is to maximize the minimum difference of tones of grey  between extremes  of any edge.  
For the gradation problem, the goal is to minimize the maximum
difference of tones of grey between extremes of any edge. 
The  gradation  problem is widely studied in a work by the same authors of this paper
\cite{cas-ga-ri-ro-vi-14}.

By  introducing this new problem  based in a vectorial function which  allocates grey tones in an equitable manner, we propose another way to  maximize the difference of colours 
 in the graph. More specifically, we are not
interested in maximize the total amount of ``contrast'' (here contrast means the difference of colours between adjacent vertices) neither maximize the minimum contrast (scalar
function) but maximize the vector whose components represent  local contrast of adjacent vertices,
in ascending order. This way, the main advantage of our proposal
lies in the possibility  of obtaining a local distribution of the
contrast for every vertex in the graph.

\begin{figure}[htb]
 \begin{center}
\includegraphics[width=7cm]{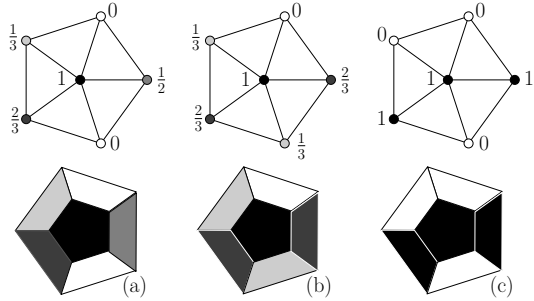}
\caption{A comparison of three greyscales for the wheel according to  their contrasts.}
\label{fig:veccontrast}
\end{center}
\end{figure}

Figure \ref{fig:veccontrast}  visually shows the goodness of our vectorial optimization  versus a scalar optimization,  based on  total amount of contrast on edges or alternatively  on maximin criterium.

Let us illustrate this fact by considering  three colourings of the  faces of the map in  Figure
\ref{fig:veccontrast}. We construct its dual graph  avoiding the external face in such a way that the resulting graph is the wheel. 
Three greyscales for  the wheel are presented. 
Observe that according to the total amount of contrast criterium, Figure \ref{fig:veccontrast} (c) is an optimal solution (with total amount of contrast equal to 7  versus 35/6 and 5 in Figure \ref{fig:veccontrast} (a) and (b), respectively).  However, in (c) there are some adjacent faces having no contrast.
On the other hand, (a) and (b) are solutions under  the scalar maximin criterium (both of them have  minimum contrast on edges  equal to $1/3$). Nevertheless, only (a) is an optimal  solution under our vectorial criterium.  In (a), it is not difficult to check  that  every  pair of adjacent faces has the maximum possible  contrast.

\par

\textit{Outline of the paper}. This paper is organized as follows. In Section 2  the necessary definitions about the contrast problem on graphs are established.
Section 3 is devoted to the maximum  contrast problem  NP-completeness nature. This  property  is deduced from the narrow relation between the
classical colouring problem and the contrast problem. Nevertheless, the set of possible
values for a maximum  contrast greyscale can be bounded by a set determined algorithmically as it is  collected in Section \ref{sec:values}. Also,  an algorithm that calculates this set of possible values of a maximum contrast greyscale  for any graph is presented and they are obtained for graphs with chromatic number up to 7.  We define also restricted versions of the original problem when the grey tones 0 or 1 of some
vertices are a priori known and the aim is to obtain the maximum contrast vector preserving such fixed grey tones. This problem is studied in Section 5 and it is solved for the family  of complete bipartite graphs. It is also analysed in  some other particular cases such as bipartite graphs and trees with some additional conditions over the set of  vertices initially coloured. The last section contains a brief review of open problems and future works.

\section{Preliminaries}
\label{sec:prelimi}

This section is devoted to establishing the basic concepts about contrast on graphs and to formulating
the problems to be studied in this paper. All over this paper, a graph  is finite,
undirected and simple and is denoted by $G(V,E)$, where $V$ and $E$ are its vertex-set and
edge-set, respectively.  The number of elements of $V$ and $E$ are denoted by $n$ and $m$, respectively.
 Let  $N(v)$ denote the set of neighbours of the vertex $v$ and let $deg(v)$ denote the {\it degree} of $v$, that is the cardinal of 
$N(v)$. For further  terminology we follow~\cite{h-gt-90}.

 Given a graph $G(V,E)$, a \textit{greyscale} $f$ of $G$
is a mapping on $V$ to the interval $[0,1]$ such that  values 0 and 1 belong to $Im(f)$.
 For each vertex $v$ of $G$, we call $f(v)$ the
\textit{grey tone} of $v$, or more generally, the \textit{colour} of $v$. Notice that two adjacent
vertices may have mapped the same grey tone. In particular,  values $0$ and $1$ are called
\textit{the extreme tones}  or \textit{white and black colours}, respectively.
 In a natural way, the notion of
\textit{complementary greyscale} arises for each
greyscale $f$ such that it maps every vertex $v$ of $G$ to $1-f(v)$.

Associated to each greyscale $f$ of the graph $G(V,E)$,
 the mapping $\widehat{f}$ is defined on $E$ to the interval $[0,1]$ as $\widehat{f}(e)=|f(u)-f(v)|$ for each $e=\{u,v\}\in E$
 and it represents the gap or increase between the grey tones of  vertices $u$ and $v$.
 The value $\widehat{f}(e)$ is also said to be the \textit{grey tone} of the edge $e$. Thus, we deal with
 \textit{coloured vertices} and \textit{edges} by $f$ and  $\widehat{f}$, respectively. Notice that, for any greyscale $f$, the same mapping
  $\widehat{f}$ associated to  $f$ and its complementary one are obtained.

The \textit{contrast vector}  
associated to the greyscale $f$ of $G$ is defined to be the vector $cont(G,f)=(\widehat{f}(e_1), \widehat{f}(e_2),
 \ldots, \widehat{f}(e_m))$      
 where the edges of $G$ are indexed in such a way that $\widehat{f}(e_i) \leq \widehat{f}(e_j)$ for $i < j$,
  that is, in ascending order of their grey tones. 
    For the sake of clarity
  and when the graph is fixed, it can be denoted $cont(G,f)={\mathcal{C}}_f.$ 
   Figure~\ref{fig:greyscaleK4} shows two
greyscales of the graph $K_4$, $f$ and $f'$,  whose corresponding  contrast vectors are  ${\mathcal{C}}_f=(0, \frac{1}{2}, \frac{1}{2},  \frac{1}{2}, \frac{1}{2},1)$ and ${\mathcal{C}}_{f'}=(\frac{1}{3}, \frac{1}{3}, \frac{1}{3},  \frac{2}{3}, \frac{2}{3},1),$ respectively.

Given two greyscales $f$ and $f'$ of a graph $G$, we say that $f$ has \textit{better contrast} than
$f'$ if their corresponding contrast vectors verify  ${\mathcal{C}}_f>{\mathcal{C}}_{f'}$,
following the lexicographical order. Thus, the ascending order of contrast vectors determines the
goodness in terms of contrast. Then, $f$ is said to be \textit{smaller or greater by contrast} than
$f'$ if ${\mathcal{C}}_f < {\mathcal{C}}_{f'}$ or ${\mathcal{C}}_f > {\mathcal{C}}_{f'}$,
respectively. In Figure~\ref{fig:greyscaleK4}, the greyscale $f'$ has better contrast than $f$.
The {\it  maximum contrast vector}  is  defined as $cont_{max}(G)= max \{cont(G,f)\, \mbox{such that} \,  f  \; \mbox{is a greyscale of }\, G\}$.
If $f$ is a greyscale of $G$ which gives rise to the vector $cont_{max}(G)$, we will say that $f$
is a {\it maximum contrast greyscale} of $G$ and the first component of $cont_{max}(G)$
will be called the {\it lightest tone} of $G$ and denoted $lt_G$.

In a similar way,  the \textit{gradation vector} associated to the greyscale $f$ of $G$ is the
vector defined as $grad(G,f) = {\mathcal{G}}_f = (\widehat{f}(e_m), \widehat{f}(e_{m-1}) , \ldots ,
\widehat{f}(e_1))$. The components of any gradation vector are ordered in decreasing order and  $f$
has \textit{better gradation} than $f'$ if their corresponding gradation vectors verify
${\mathcal{G}}_f<{\mathcal{G}}_{f'}$, according to the lexicographical order. In Figure~\ref{fig:greyscaleK4}, ${\mathcal{G}}_f= (1, \frac{1}{2}, \frac{1}{2},  \frac{1}{2}, \frac{1}{2},0) < {\mathcal{G}}_{f'}= (1, \frac{2}{3}, \frac{2}{3},  \frac{1}{3}, \frac{1}{3},\frac{1}{3})$, hence the greyscale $f$ has better gradation than $f'$.
 See \cite{cas-ga-ri-ro-vi-14} for more details.

\begin{figure}
\begin{center}
\includegraphics[width=9cm]{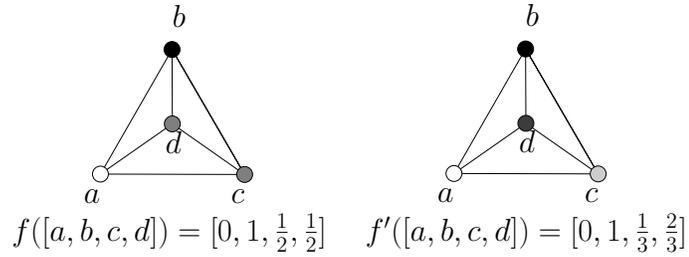}
\caption{Two greyscales $f$ and $f'$  of the graph $K_4$. } \label{fig:greyscaleK4}
\end{center}
\end{figure}

The following problem arises naturally in the context of contrast. It is posed
for connected graphs but general graphs can be also considered and analogous results hold when
working with each one of their connected components.

\vspace*{.3cm}
 \textbf{Maximum
contrast on graphs} (\textsc{macg})

\textit{Instance:} Connected graph $G(V,E)$.

\textit{Question:} Is it possible to find a greyscale of $G$ such
that its contrast vector is maximum?

\vspace*{.3cm} We deal with the restricted version of this problem, namely 
when the grey tones of some vertices are a priori known and the aim
is to obtain the maximum contrast vector preserving these fixed grey
tones.   We  focuss on a particular case of this  problem, namely when only  white and black tones are fixed. 

Given a graph $G(V,E)$ and a nonempty proper subset $V_c$ of $V,$
\textit{an incomplete $V_c$-greyscale} of $G$ is a mapping on $V_c$
to the interval $[0,1]$. The vertices of $V_c$ are named {\it initially coloured vertices}. A greyscale $f$ is
\textit{compatible} with an incomplete $V_c$-greyscale $g$ if
$f(u)=g(u)$ for all $u \in V_c$. The process of obtaining such an $f$ is
called {\it extending the incomplete greyscale $g$}.
 The problem is established as follows and it will be formally studied in 
  Section \ref{sec:restricted}.

\vspace*{.3cm}

 \textbf{$\{0,1\}$-Restricted maximum contrast on graphs}
($\{0,1\}$-\textsc{rmacg})

\vspace*{.3cm}
\textit{Instance:} Connected graph $G(V,E)$ and  an incomplete $V_c$-greyscale $g$, where $V_c=V_0\cup V_1 \subset V$ with $V_0$ and $V_1$
disjoint subsets  and such that $g(v)=0$ for
$v \in V_0$ and $g(v)=1$ for $v \in V_1$.

\textit{Question:} Is it possible to find a greyscale $f$ of $G$
compatible with $g$ such that its contrast vector is maximum in the
set of contrast vectors of all possible  greyscales compatible with
$g$ defined on $G$?

If a greyscale $f$ provides an affirmative answer to the above problem we say that $f$ is a  {\it maximum contrast greyscale for the $\{0,1\}$-\textsc{rmacg} problem}. Note  that  $f$ is a maximum contrast greyscale compatible with the greyscale given in  the instance.  The contrast vector associated to 
$f$ is named {\it the maximum contrast vector for the $\{0,1\}$-\textsc{rmacg} problem}.

  \section{Maximum contrast problem}\label{sec:problem}
\label{sec:contrast} In this section the problem of finding out  the maximum contrast of a graph,
denoted \textsc{macg},  is studied. Given a graph $G(V,E)$, this problem consists on knowing
whether   a greyscale  whose contrast vector is maximum can be found for $G$. A relation between the chromatic number  and the lightest tone of the graph $G$ is shown. The main consequence of this property is the NP-completeness of the problem {\sc macg}.

  Let $G(V,E)$ be a connected graph and let $f$ be a greyscale of $G$. 
   Our purpose is to obtain the maximum contrast vector, namely $cont_{max}(G) = max \{cont(G,f)\,$ $ \mbox{such that} \,  f \; \mbox{is a greyscale of }\, G\}$. Therefore,
answering the question proposed in {\sc macg}  can be considered as a
maximin problem. According to the definition of better
contrast, given in Section \ref{sec:prelimi}, it is clear that the
maximum contrast vector $cont_{max}(G)$ has no component equal to 0.
 Also, it is  immediately deduced that a necessary condition for a greyscale $f$ to
 be a maximum contrast greyscale of $G$
 is that for any vertex $v$ with degree 1, $f(v)$ is an extreme tone.

Given  a  greyscale $f$ of $G$,  
 an \emph{incremental path of length} $k
\in N$ for $f$ is defined to be a path  of $G,$  
$P_k=\{u_0,e_1,u_1,e_2,u_2,\dots ,e_k,u_k \}$ with $f(u_i)=\frac{i}{k}$ for $i=0,1,\dots ,k$. Thus,
in any incremental path of length $k$ all edges are coloured with the grey tone $\frac{1}{k}$.

We demonstrate that if $f$ is a maximum contrast greyscale of $G$, then the edges coloured with the lightest tone are located in some incremental path.

Next, we study the relation between the chromatic number and the vector
$cont_{max}(G)$. First,  let us consider a  graph $G$ with chromatic number
$\chi(G)=2$. Any 2-colouring $f$ of $G$ provides a greyscale with
colours 0 and 1 and a contrast vector with  all components equal to 1, hence  this greyscale gives rise to the vector
$cont_{max}(G)$. We observe that in
this particular case,  the chromatic
number equals  the number of grey tones in any maximum contrast greyscale of 
 $G$.  This property is far from the more general case of connected graphs with $\chi(G)\geq 3$, which is studied  next.

\begin{lemma}\label{previo}
Let  $G(V,E)$ be a connected graph and let $f$ be a maximum contrast
greyscale of $G$.
 Let $v\in V$ be a vertex  such that   $0<f(v)<
1$, then there exist  $u_1$ and $u_2\in N(v)$  satisfying both
following assertions:
\begin{enumerate}
\item $f(u_1)<f(v)<f(u_2).$
\item $\widehat{f}(\{u_1,v\}) = \widehat{f}(\{u_2,v\}) =min \{\widehat{f}(\{u,v\}) : u\in N(v)\}.$
\end{enumerate}
\end{lemma}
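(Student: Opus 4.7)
The plan is to prove both assertions via a single local perturbation argument that exploits the freedom in $f(v)$: since only edges incident to $v$ change when $f(v)$ is perturbed, and since $f$ is lex-maximal over all greyscales of $G$, any small perturbation $f(v)\pm\epsilon$ must produce a lexicographically no-larger contrast vector. Two preliminary facts would be used. First, as already noted, $cont_{max}(G)$ has no zero component, hence no neighbor $u$ of $v$ satisfies $f(u)=f(v)$. Second, a short monotonicity lemma on sorted multisets: if some elements of a multiset are replaced by strictly larger values and the others are unchanged, then the ascending sorted vector strictly increases in lex order, which follows by comparing the counts $\#\{x\le t\}$ at every threshold $t$.

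For assertion~1, suppose for contradiction that every neighbor $u$ of $v$ satisfies $f(u)>f(v)$ (the case where all neighbors lie below is symmetric). Since $f(v)>0$, I can replace $f(v)$ by $f(v)-\epsilon$ for a sufficiently small $\epsilon>0$; every edge incident to $v$ then strictly increases in value while the remaining edges are unchanged, and the multiset lemma yields a strict lex-increase of the contrast vector, contradicting maximality.

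For assertion~2, set $m^{+}=\min\{f(u)-f(v):u\in N(v),\,f(u)>f(v)\}$ and $m^{-}=\min\{f(v)-f(u):u\in N(v),\,f(u)<f(v)\}$; by assertion~1 both are well-defined and positive. Suppose for contradiction that $m^{+}\neq m^{-}$, and without loss of generality $m^{+}<m^{-}$, so the local minimum at $v$ equals $m^{+}$ and is attained only by certain above-edges. Decrease $f(v)$ by some $0<\epsilon<m^{-}-m^{+}$: every above-edge grows by $\epsilon$ to a value $>m^{+}$, every below-edge shrinks by $\epsilon$ but remains strictly above $m^{+}$, and all non-$v$-edges are unchanged. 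The crucial bookkeeping is that the count of edges with value $<m^{+}$ is preserved (only non-$v$-edges contribute, and they are unchanged), while the count of edges with value $\le m^{+}$ strictly decreases, since every $v$-edge formerly of value exactly $m^{+}$ was an above-edge and has now moved strictly past $m^{+}$. Comparing the sorted vectors position by position, they agree on the block of entries $<m^{+}$ and on the block of non-$v$-edges at value $m^{+}$; at the very next index the old vector still contains $m^{+}$ while the new one contains a value $>m^{+}$, producing a lex-strict increase and contradicting maximality. The case $m^{+}>m^{-}$ is handled by the symmetric perturbation $f(v)\to f(v)+\epsilon$.

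The main technical subtlety lies in assertion~2: because the perturbation simultaneously raises the above-edges and lowers the below-edges, the monotonicity lemma for multisets does not apply directly, and pinpointing exactly where the two sorted vectors first diverge requires the count bookkeeping described above.
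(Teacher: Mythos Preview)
Your proof is correct and uses the same core idea as the paper: perturb $f(v)$ by a small $\epsilon$ and derive a lexicographic improvement of the contrast vector, contradicting maximality. The paper organises the argument slightly differently: rather than proving assertions~1 and~2 separately, it sets $a=\min_{u\in N(v)}\widehat f(\{u,v\})$, partitions $N(v)=M_v\cup N_v$ according to whether the incident edge has value exactly $a$ or strictly more, and chooses $\epsilon$ small enough that every $N_v$-edge stays above $a+\epsilon$ after the shift. This reduces the whole lemma to showing that $M_v$ contains neighbours on both sides of $f(v)$, and the contradiction is then purely monotone (only $M_v$-edges move across the threshold $a$), so the two-sided count bookkeeping you carry out for assertion~2 is not needed. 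Your decomposition is perfectly valid, just a touch less economical; conversely, your explicit multiset/threshold-count argument makes the lexicographic comparison more transparent than the paper's somewhat informal claim about the form of $\mathcal C_{f_{v^\pm}}$.
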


\begin{proof} Let us consider
$a= \min \{ \widehat{f}(\{u,v\}): u\in N(v) \} $. Since $f$ is a maximum contrast greyscale, then $a> 0$ trivialy and
since $f(v)<1$, it is clear that $a<1$.\par 
Let us define the set $\mathcal{A} = \{ e \in E : \
\widehat{f}(e)=a \}$, then the maximum contrast vector  has  $|\mathcal{A}|$ coordinates equal to $a$, namely 
$C_{f}=(c_1,c_2,\dots ,c_r, a,a,\dots ,a, \dots ).$ In  case that  $a$ is
equal to the lightest tone of $G$ then $r=0$.
\par Let us now
partition the set $N(v)=M_v \cup N_v$, as follows: 
$$M_v= \{ u \in N(v) : \widehat{f}(\{u,v\})=a \},\quad
N_v= \{ u \in N(v) : \widehat{f}(\{u,v\})>a \}.$$

Let us define  $b=\min \{ \widehat{f}(\{u,v\}) : u \in
N_v \}$ if $N_v \neq \emptyset$ and $b=1$  otherwise. Since $a<b$,
let us consider  $\varepsilon>0$, such that $a+2 \varepsilon < b$
and $0<f(v)-\varepsilon < f(v)+ \varepsilon < 1$. Two new
greyscales $f_{v^+}$ and $f_{v^-}$ are now defined as follows: \vskip.5cm

$f_{v^+}(w)=\left\{\begin{array}{ccc}
                    f(w) & \mbox{if} \ \ \  w\neq v  &  \  \\
                     & & \\
                    f(v)+\varepsilon & \mbox{if} \ \ \  w=v  &
                   \end{array}
                  \right.$  
 $f_{v^-}(w)=\left\{\begin{array}{ccc}
                    f(w) & \mbox{if} \ \ \  w\neq v  & \  \\
                     & & \\
                    f(v)-\varepsilon & \mbox{if} \ \ \   w=v  &
                   \end{array}
                  \right.$ \vskip.5cm
By definition, the greyscales  $f_{v^\pm}$ verify
$\widehat{f}_{v^\pm}(\{w_1,w_2\})=\widehat{f}(\{w_1,w_2\})$
for all edges $\{w_1,w_2\}$ of $G$ with $w_1,w_2 \neq v$ and
$\widehat{f}_{v^\pm}(\{w_1,v\})= a \pm \varepsilon < b\pm \varepsilon =\widehat{f}_{v^\pm}(\{w_2,v\})$ for all
edges $\{w_1,v\}$, $\{w_2,v\}$ of $G$ with $w_1 \in M_v$ and $w_2 \in
N_v$, in case that $N_v\neq \emptyset$.\par

Let us  prove statement  1 by reductio ad adsurdum. By assuming  $f(u)\geq f(v)$ for all $u \in M_v$, we obtain $\widehat{f}_{v^-}(\{u,v\})=a+\varepsilon$.
In an analogous way, if we suppose  $f(u)\leq f(v)$ for all $u \in M_v$, then
$\widehat{f}_{v^+}(\{u,v\})=a+\varepsilon$. Now, by using the properties of the functions
$f_{v^+}$ and $f_{v^-}$, it is readily deduced that both  contrast vectors associated to these
greyscales verify
$$C_{f_{v^\pm}}=(c_1,c_2,\dots ,c_r, a, \dots ,a,a+\varepsilon, \dots, a+\varepsilon, \dots)$$
with $|\mathcal{A}|-|M_v|$ coordinates equal to $a$ and at least $|M_v|$
coordinates equal to $a+\varepsilon$. Obviously, the contrast
vectors associated to $f_{v^+}$ and $f_{v^-}$ are better than the
contrast vector associated to $f$, contradicting that $f$ is a
maximum contrast greyscale. Consequently, both  assumptions $f(v)\leq f(u)$ and $f(u)\leq f(v)$  for all $u \in M_v$  are false
and there is at least one vertex $u_1 \in M_v$ and one vertex $u_2
\in M_v$ such that $f(u_1)<f(v)<f(u_2)$. Since $u_1$ and $u_2$ belongs to $M_v$, then $\widehat{f}(\{u_1,v\})=\widehat{f}(\{u_2,v\})=a$ and statement 2 holds.
\end{proof}

 From now on,  the pair of vertices $u_1$ and  $u_2$ associated to  a vertex $v$, given by Lemma \ref{previo}, will be named {\it  pair of neighbours closest to
$v$}, the vertex $u_1$ will be named {\it the neighbour closest to $v$ on
the left} and the vertex $u_2$ will be named {\it the neighbour closest
to $v$ on the right}.

Let us remark  that the existence of such pair of vertices $u_1, u_2$ will
be determinant in the searching of possible values of
$Im(f)$, for any maximum  contrast greyscale $f$ of $G$. The following proposition  shows a first result concerning $Im(f)$  although this set   will be studied in detail in Section \ref{sec:values}.

\begin{prop}\label{previo2}
Let $G(V,E)$ be a connected graph   and let $f$ be a maximum contrast greyscale of $G$. If there exists a vertex $v\in V$ such that $0<f(v)<1$, then $lt_G\leq \frac12$. Moreover, if  $lt_G= \frac12$, then  $Im(f) =  \{0,\,  \frac{1}{2}, \, 1\}$.
\end{prop}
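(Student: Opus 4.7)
The plan is to apply Lemma \ref{previo} directly to the vertex $v$ with $0<f(v)<1$ and extract a pinching inequality on the minimum incident grey tone.

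By Lemma \ref{previo}, there exist $u_1,u_2\in N(v)$ with $f(u_1)<f(v)<f(u_2)$ and $\widehat{f}(\{u_1,v\})=\widehat{f}(\{u_2,v\})=a$, where $a=\min\{\widehat f(\{u,v\}):u\in N(v)\}$. Then $f(v)-f(u_1)=a$ and $f(u_2)-f(v)=a$, whence $f(u_2)-f(u_1)=2a$. Since $f$ takes values in $[0,1]$, this forces $2a\leq 1$, i.e.\ $a\leq \tfrac12$. Now $lt_G$ is by definition the smallest coordinate of $cont_{max}(G)$, so $lt_G\leq \widehat{f}(\{u_1,v\})=a\leq \tfrac12$, which gives the first assertion.

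For the second assertion, assume $lt_G=\tfrac12$. Then the inequality $lt_G\leq a\leq \tfrac12$ collapses to $a=\tfrac12$. Consequently $f(u_1)=f(v)-\tfrac12\geq 0$ and $f(u_2)=f(v)+\tfrac12\leq 1$, forcing $f(v)=\tfrac12$, $f(u_1)=0$ and $f(u_2)=1$. The argument applies to \emph{every} vertex $w$ with $0<f(w)<1$ (Lemma \ref{previo} applies to each such $w$), so every such $w$ satisfies $f(w)=\tfrac12$. Therefore $Im(f)\subseteq\{0,\tfrac12,1\}$; together with $0,1\in Im(f)$ (by the definition of greyscale) and $\tfrac12=f(v)\in Im(f)$ we conclude $Im(f)=\{0,\tfrac12,1\}$.

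I do not anticipate any real obstacle: the whole content sits in Lemma \ref{previo}, and the only care needed is the double use of the extracted inequality $2a\leq 1$, once to bound $lt_G$ and once, in the boundary case $a=\tfrac12$, to pin down $f(v)$, $f(u_1)$ and $f(u_2)$ exactly. The only subtle point is to remember that the consequence $f(w)=\tfrac12$ holds for every interior-valued vertex, not merely the distinguished $v$, so that $Im(f)$ contains no other value.
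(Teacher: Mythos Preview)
Your proof is correct and follows essentially the same approach as the paper: both apply Lemma~\ref{previo} to obtain the symmetric neighbours $u_1,u_2$, derive $f(u_2)-f(u_1)=2a\le 1$ with $a\ge lt_G$, and in the boundary case $lt_G=\tfrac12$ force $f(u_1)=0$, $f(v)=\tfrac12$, $f(u_2)=1$. Your version is in fact slightly more explicit than the paper's in spelling out that the argument must be applied to \emph{every} vertex $w$ with $0<f(w)<1$ to conclude $Im(f)\subseteq\{0,\tfrac12,1\}$.
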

\begin{proof}
	
	If $f(v)$ is not $0$ nor  $1$  then, from Lemma \ref{previo}, there exists a pair of neighbours closest to
	$v$, say $u_1$ and $u_2$, with
	$0\leq f(u_1)<f(v)<f(u_2)\leq 1$ such that $f(v)-f(u_1)=f(u_2)-f(v)=\min_{u\in N(v)} \hat{f}(\{u,v\})\ge lt_G$.
	Therefore,  $1\ge f(u_2)-f(u_1)\ge 2 lt_G$ is held and then,  $lt_G\le\frac12$. In the particular case that $lt_G=\frac 12$ we obtain  $f(u_2)-f(u_1)=1$, which provides necessarily  $f(u_1)=0, f(u_2)=1$ and $f(v)=\frac 12$. Consequently, we reach	$Im(f) = \{0,\,  \frac{1}{2}, \, 1\}$.
	
\end{proof}

The next lemma shows that the lightest tone  $lt_G$  is a rational number $\frac{1}{k}$,
being $k$ a natural number. Moreover, the lightest tone is the colour of the edges of  any
incremental path of length $k$.

\begin{lemma}\label{pricamino}
 Let  $G(V,E)$ be a connected graph and let $f$ be a maximum
contrast greyscale of $G$ with lightest tone  $lt_G$. The following statements hold:
\begin{enumerate}
   \item There exists a natural number $k$ such that $lt_G=\frac{1}{k}.$
   \item For every  $e \in E$ with $\widehat{f}(e)=lt_G$ there is at least one incremental path of length $k$ containing
   $e$.
   \item The vector $cont_{max}(G)$ has  at least $k$ components equal to $lt_G$.
   \item The set $I_k= \{\frac{i}{k} \ : \  i=0, \, 1,\dots , k \}$ is a subset of $Im(f)$.
 \end{enumerate}

\end{lemma}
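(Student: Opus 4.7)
The plan is to prove all four statements simultaneously by constructing an incremental path starting from an arbitrary edge of colour $lt_G$ and extending it in both directions via repeated application of Lemma \ref{previo}. Fix any edge $e=\{x_0,x_1\}$ with $\widehat{f}(e)=lt_G$, and assume without loss of generality that $f(x_0)<f(x_1)$, so $f(x_1)-f(x_0)=lt_G$.

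The construction proceeds as follows. Whenever $f(x_p)<1$ for the current right-most vertex $x_p$ (so in particular $0<f(x_p)<1$ once $p\ge 1$), Lemma \ref{previo} applied at $x_p$ yields a neighbour on the right, i.e.\ a vertex $u_2\in N(x_p)$ with $f(u_2)>f(x_p)$ whose edge weight equals $\min\{\widehat{f}(\{u,x_p\}):u\in N(x_p)\}$. Because the edge $\{x_{p-1},x_p\}$ already contributes the value $lt_G$ to this minimum, and $lt_G$ is the global minimum of $\widehat{f}$, this neighbour minimum is exactly $lt_G$. Setting $x_{p+1}:=u_2$ then gives $f(x_{p+1})=f(x_p)+lt_G$. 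Symmetrically, whenever $f(x_{-j})>0$, the lemma produces a left neighbour $x_{-j-1}$ with $f(x_{-j-1})=f(x_{-j})-lt_G$. Since $f$ is strictly monotone along the sequence, all vertices are distinct and the walk is in fact a path.

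The key point, which I expect to be the only delicate step, is showing that the extension terminates precisely at the values $0$ and $1$ rather than at some intermediate value. Suppose, towards a contradiction, that $0<f(x_p)<1$ but $f(x_p)+lt_G>1$. Then the right-neighbour $x_{p+1}$ produced by Lemma \ref{previo} would satisfy $f(x_{p+1})=f(x_p)+lt_G>1$, contradicting $f(V)\subseteq[0,1]$. Hence the process on the right must stop exactly when $f(x_p)=1$, and analogously on the left when $f(x_{-j})=0$. Letting $k=p+j$ be the total length, we obtain
\[
k\cdot lt_G \;=\; f(x_p)-f(x_{-j}) \;=\; 1,
\]
so $lt_G=\tfrac{1}{k}$, proving statement 1. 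Reindexing $v_i:=x_{i-j}$ for $i=0,\dots,k$ yields an incremental path of length $k$ with $f(v_i)=i/k$ that contains $e$, proving statement 2 and showing $I_k\subseteq Im(f)$, i.e.\ statement 4. Finally, the $k$ edges of this path all have weight $lt_G$, so in the lexicographically sorted vector $cont_{max}(G)$ the first $k$ components, each bounded below by $lt_G$ and attained at these $k$ edges, must all equal $lt_G$, establishing statement 3.
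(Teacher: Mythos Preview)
Your proof is correct and follows essentially the same approach as the paper: both start from an arbitrary edge of colour $lt_G$ and repeatedly invoke Lemma~\ref{previo} to extend left and right by steps of size $lt_G$ until the values $0$ and $1$ are reached, then read off $lt_G=1/k$ and statements 2--4 from the resulting incremental path. Your write-up is in fact slightly more explicit than the paper's on two points---why the neighbour minimum at each step equals $lt_G$ (because the previous edge already realises the global minimum) and why the constructed walk is a genuine path (strict monotonicity of $f$ forces distinct vertices)---but the underlying argument is the same.
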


\begin{proof}

Without loss of generality, we can suppose  $lt_G<1$, since otherwise the result is obviously true for
$k=1$: the incremental path of length $1$ is precisely the edge $e$.

Let us consider an edge $e=\{u_0,v_0\}$ such that
$\widehat{f}(e)=lt_G<1$. In this case, it is clear that $\{f(u_0),
f(v_0)\} \neq \{0, 1\}$ and at least one of these  vertices has degree at least two since $G$ is connected. We suppose that $f(u_0)<f(v_0)$, otherwise
the proof is similar.

Firstly, let us consider the case $0<f(u_0)<f(v_0)<1$,  Lemma \ref{previo}  ensures the existence of a neighbour $v_1$ closest to $v_0$ on the right satisfying $\widehat{f}(\{v_0,v_1\})=lt_G$.
Hence, by applying   Lemma \ref{previo} to the vertex $v_1$, it is possible to obtain a new
neighbour $v_2$ closest to $v_1$ on the right such that $\widehat{f}(\{v_1,v_2\})=lt_G$ and by iterating this process, a sequence of
 neighbours closest on the right
 $\{v_0,v_1,\dots ,v_{s} \}$ such that $f(v_s)=1$ and $s=\displaystyle{\left[ \frac{1-f(v_0)}{lt_G} \right]}$ is constructed.
Analogous reasoning is applied on the left of $u_0$. In this situation,  Lemma \ref{previo}  ensures the existence of a
neighbour $u_1$ closest to $u_0$ on the left. 
 Hence, by iterating this procedure, it is possible to obtain a sequence of neighbours closest on the left
  $\{u_{r},u_{r-1},\dots ,u_0 \}$ such that
 $f(u_r)=0$ and $r=\displaystyle{\left[ \frac{f(u_o)}{lt_G} \right]}$.

Let $P_k=\{u_r,\dots ,u_0,v_0,v_1,\dots ,v_s \}$, with $k=r+s+1$, stand for the
incremental path of $G$. Note that $P_k$ satisfies
 $0=f(u_r)<f(u_{i-1})< \dots <f(u_0)<f(v_0)< \dots f(v_s)=1$.
 
 Secondly, let us consider the case of $f(u_0)=0$, hence the path $P_k$ starts with $u_0$, that is $r=0$. Analougously, in case that $f(v_0)=1$ the path $P_k$ finishes with $v_0$,
  that is $s=0$.
 
Finally, from the above construction, the interval $[0,1]$ is divided into
$k$  subintervals of equal length $lt_G$. Hence $lt_G=\frac{1}{k}$ with $k
\in \mathbb{N}$. This shows assertions 1 and 2. Observe that the $ith$-vertex 
 of $P_k$ takes the grey tone  $\frac{i}{k}$ (for $i=0, 1, \dots, k$)
and consequently all its edges have grey tone qual to $lt_G$.
Therefore, statements 3 and 4 hold and the proof is finished.

\end{proof}

Now we can state the main results of this section.

\begin{thm}\label{thm:coulor}
Given a connected graph $G(V,E)$ with lightest tone $lt_G=\frac{1}{k}$ and $f$ a maximum contrast greyscale, 
then  a $(k+1)$-colouring of $G$ is obtained from $f$.
\end{thm}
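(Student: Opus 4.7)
The plan is to extract a $(k+1)$-colouring directly from $f$ by binning the values in $Im(f)\subseteq [0,1]$ into $k+1$ classes whose widths match $lt_G=1/k$. Define $c:V\to\{0,1,\ldots,k\}$ by $c(v)=\lfloor k\,f(v)\rfloor$; equivalently, $c(v)=j$ precisely when $f(v)\in[j/k,(j+1)/k)$ for $0\le j<k$, and $c(v)=k$ when $f(v)=1$. This takes at most $k+1$ distinct values by construction.

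The main step is to verify that $c$ is a proper colouring. Suppose, towards a contradiction, that $\{u,v\}\in E$ and $c(u)=c(v)=j$. If $j<k$, then both $f(u)$ and $f(v)$ lie in the half-open interval $[j/k,(j+1)/k)$, which has length $1/k$, so $|f(u)-f(v)|<1/k=lt_G$; this contradicts the fact that every edge has weight at least $lt_G$. If $j=k$, then $f(u)=f(v)=1$ and hence $\widehat{f}(\{u,v\})=0$, contradicting the observation made at the start of this section that $cont_{max}(G)$ has no component equal to $0$.

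To see that all $k+1$ colours are actually attained, invoke assertion 4 of Lemma \ref{pricamino}: the set $I_k=\{i/k:i=0,1,\ldots,k\}$ is contained in $Im(f)$, and any vertex with $f(v)=i/k$ is assigned the colour $\lfloor k\cdot i/k\rfloor=i$ under $c$. Therefore $c$ is a proper $(k+1)$-colouring of $G$.

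The argument has no serious obstacle; at its core it is a pigeonhole on bin widths, using that two reals in the same interval of length $1/k$ differ by strictly less than $1/k$. The only mild subtlety is the treatment of the endpoint $f(v)=1$: with the floor convention it lands in the singleton class for colour $k$, which is exactly what is needed to keep it separate from the vertices whose values lie in $[(k-1)/k,1)$ and are assigned colour $k-1$.
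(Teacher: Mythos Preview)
Your proof is correct and essentially identical to the paper's own argument: the paper defines the same map $\Phi(v)=i$ if $i/k\le f(v)<(i+1)/k$ (and $\Phi(v)=k$ if $f(v)=1$), verifies properness by the same length-$1/k$ interval contradiction, and invokes Lemma~\ref{pricamino} to guarantee that all $k+1$ colour classes are nonempty. The only cosmetic difference is your use of the floor-function shorthand $c(v)=\lfloor k\,f(v)\rfloor$ in place of the explicit piecewise description.
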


\begin{proof}
Let us show that the following mapping  $\Phi: V \longrightarrow \{0, 1,
\dots, k \}$, is a $(k+1)-$colouring of $G$:

$$\Phi(v)=\left\{\begin{array}{ccc}
                    i & \mbox{if} \ \ \ \ \ \ \frac{i}{k}\leq f(v) < \frac{i+1}{k}  & \mbox{for}\  \ \ \ \ \  i=0,\dots ,k-1 \\
                     & & \\
                    k & \mbox{if} \ \ \ \ \ \  f(v)=1  &
                   \end{array}
                  \right.$$

Since Lemma \ref{pricamino} provides  one incremental path of length $k$
whose vertices  take the values $\frac{i}{k}$ for $i=0,1,\dots ,k$, the
chromatic  classes induced by $\Phi $ are not empty. Next we prove
that  any two vertices $u, v \in V$ with $\Phi (u)= \Phi (v)$ are
not adjacent in $G$. Suppose on the contrary that there is an edge
$e=\{u,v\} \in E$ such that $\Phi (u)= \Phi (v)=i$ with $i\neq k$.
Since $\frac{i}{k} \leq f(u) < \frac{i+1}{k}$ and $\frac{i}{k} \leq
f(v) < \frac{i+1}{k}$ then $\widehat{f} (e)=\mid f(u)-f(v)\mid <
\frac{1}{k} $ contradicting the hypothesis $lt_G=\frac{1}{k}$. In addition, if $\Phi (u)= \Phi (v)=k$
then $\widehat{f} (e)=\mid f(u)-f(v)\mid =0$ contradicting   that 
 $cont_{max}(G)$ has no  component equal to  0.

\end{proof}

\begin{thm}\label{th:chi-1}
The lightest tone of a connected graph $G(V,E)$ is 
$lt_G=\frac{1}{\chi(G)-1}$.
\end{thm}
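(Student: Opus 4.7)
The plan is to prove $lt_G = \frac{1}{\chi(G)-1}$ by establishing the two inequalities separately.

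First, I would obtain the upper bound $lt_G \le \frac{1}{\chi(G)-1}$ as an immediate consequence of Theorem \ref{thm:coulor}. By Lemma \ref{pricamino}, the lightest tone can be written as $lt_G = \frac{1}{k}$ for some natural number $k$. Theorem \ref{thm:coulor} then yields a proper $(k+1)$-colouring of $G$, so $\chi(G) \le k+1$, equivalently $k \ge \chi(G)-1$, which gives $lt_G = \frac{1}{k} \le \frac{1}{\chi(G)-1}$.

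Second, I would establish the lower bound $lt_G \ge \frac{1}{\chi(G)-1}$ by exhibiting a concrete greyscale achieving it. Take any proper colouring $c: V \to \{0, 1, \dots, \chi(G)-1\}$ that realises the chromatic number (so every colour class is non-empty), and define
\[
f(v) = \frac{c(v)}{\chi(G)-1}, \quad v \in V.
\]
Since every class is used, both $0$ and $1$ lie in $Im(f)$, so $f$ is a legitimate greyscale of $G$. For every edge $e=\{u,v\}\in E$, properness gives $c(u)\neq c(v)$, hence $|c(u)-c(v)|\ge 1$ and consequently
\[
\widehat{f}(e) = \frac{|c(u)-c(v)|}{\chi(G)-1} \ge \frac{1}{\chi(G)-1}.
\]
Thus the minimum component of $cont(G,f)$ is at least $\frac{1}{\chi(G)-1}$. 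Since $lt_G$ is the first component of $cont_{max}(G)$, which dominates $cont(G,f)$ lexicographically and hence coordinate-wise in the first position, we conclude $lt_G \ge \frac{1}{\chi(G)-1}$.

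Combining both bounds gives the claimed equality. The bipartite case $\chi(G)=2$ fits consistently, recovering $lt_G=1$ as already observed in the preamble to Lemma \ref{previo}. I do not anticipate a real obstacle: the upper bound is essentially a restatement of Theorem \ref{thm:coulor}, and the lower bound reduces to the routine verification that equispacing the colour classes inside $[0,1]$ produces edge weights bounded below by $\frac{1}{\chi(G)-1}$. The only subtle point is ensuring $0$ and $1$ belong to $Im(f)$, which is guaranteed by selecting a colouring that uses each of the $\chi(G)$ colours at least once.
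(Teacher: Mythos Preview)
Your proof is correct and follows essentially the same approach as the paper: both obtain the upper bound from Theorem~\ref{thm:coulor} (together with Lemma~\ref{pricamino}) and the lower bound by rescaling an optimal proper colouring $c:V\to\{0,\dots,\chi(G)-1\}$ to a greyscale $f(v)=c(v)/(\chi(G)-1)$ whose edge tones are all at least $\frac{1}{\chi(G)-1}$. Your version is in fact slightly more explicit in checking that $0,1\in Im(f)$ so that $f$ is a genuine greyscale.
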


\begin{proof}
The existence of a $(k+1)-$colouring of $G$ is ensured from Theorem \ref{thm:coulor}.
 Consequently, $k+1 \geq \chi(G)$.

In order to prove that $k+1 \leq \chi(G)$, let  $\Phi: V
\longrightarrow \{0, 1, \dots, r \}$ be  a colouring of $G$ with
$r+1=\chi(G)$ colours and we define $f: V \longrightarrow [0, 1]$ a
greyscale of $G$ as $f(v)=\frac{\Phi (v)}{r}$ for all $v \in V$.
Hence,
 $$\widehat{f} (e)= \mid f(u)-f(v) \mid = \left| \frac{\Phi
(u)}{r} - \frac{\Phi (v)}{r}\right| = \frac{1}{r} | \Phi (u) - \Phi (v) | \geq \frac{1}{r}$$ for
every $e=\{u,v\} \in E$. Therefore, by  definition,  the first coordinate of $C_f$, namely  $c$, satisfies  $\frac{1}{r}\leq c$. 
 Then  the lightest tone of the maximum contrast vector also  verifies this inequality, that is 
  $\frac{1}{r} \leq c \leq  \frac{1}{k}$ and we reach $k+1 \leq r+1=
\chi(G)$. 

\end{proof}

It is well known that the problem of computing the chromatic number is NP-hard \cite{gj-cigtnc-79}.
As a consequence of   Theorem \ref{th:chi-1} we can conclude this section with the following fact.

\begin{thm}\label{thm:NP}
The \textsc{macg} problem is NP-complete.
\end{thm}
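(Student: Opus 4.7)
The plan is to verify the two ingredients of NP-completeness: \textsc{macg} lies in NP, and it is NP-hard. Strictly speaking, as posed the answer to \textsc{macg} is always affirmative, so I work with its natural decision version: given a connected graph $G$ and a rational threshold $q\in(0,1]$, decide whether $lt_G\ge q$. The crucial lever is Theorem \ref{th:chi-1}, which provides the closed-form identity $lt_G=1/(\chi(G)-1)$.

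The heart of the argument is NP-hardness via a polynomial-time many-one reduction from the $k$-colourability problem, a classical NP-complete problem \cite{k-rcp-72}. Given an instance $(G,k)$ of $k$-colourability with $k\ge 3$, I output the \textsc{macg} instance $(G,\, 1/(k-1))$. By Theorem \ref{th:chi-1},
\[
lt_G\ge \tfrac{1}{k-1} \iff \chi(G)-1\le k-1 \iff \chi(G)\le k,
\]
so the reduction is correct; it is clearly polynomial since $G$ is passed through unchanged. Hence any polynomial-time algorithm for \textsc{macg} would decide $k$-colourability in polynomial time.

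For NP membership, a certificate will be a greyscale $f$ witnessing $\min_{e\in E}\widehat f(e)\ge q$. Lemma \ref{pricamino} guarantees that an optimal greyscale can always be taken with image in $\{0,\tfrac{1}{k},\tfrac{2}{k},\dots,1\}$ for some $k\le n-1$, so $f$ is described by $n$ integers in $\{0,1,\dots,n-1\}$ together with a denominator $k$, for a total encoding size of $O(n\log n)$. The verifier computes $\widehat f(e)$ on every edge and compares the minimum with $q$ in linear time.

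The main (and essentially only) conceptual obstacle is ensuring that optimal greyscales admit polynomial-size descriptions, so that the certificate is legitimate; this is supplied directly by Lemma \ref{pricamino}. All other pieces reduce to invoking the structural identity of Theorem \ref{th:chi-1} and the standard NP-hardness of vertex colouring, so the result follows at once.
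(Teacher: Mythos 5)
Your overall strategy coincides with the paper's: the paper proves this theorem in one line by combining Theorem \ref{th:chi-1} with the NP-hardness of computing the chromatic number, and your reduction from $k$-colourability via the identity $lt_G=\frac{1}{\chi(G)-1}$ is exactly that argument made explicit. Your reformulation as a threshold decision problem (``is $lt_G\ge q$?'') is a sensible --- indeed necessary --- repair of the loosely stated question, and the equivalence $lt_G\ge \frac{1}{k-1}\iff \chi(G)\le k$ is correct, as is the observation that the reduction is trivially polynomial.

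There is, however, one genuine error in the NP-membership half. You assert that Lemma \ref{pricamino} lets you take an optimal greyscale with image contained in $\{0,\frac 1k,\dots,1\}$. That lemma gives the \emph{opposite} inclusion, $I_k\subset Im(f)$, and the paper explicitly exhibits a maximum contrast greyscale of the wheel (Figure \ref{fig:veccontrast}) whose image strictly contains $I_3$; the correct upper bound is $Im(f)\subset F_k$ (Theorem \ref{values}), and $|F_k|$ grows far too quickly (e.g.\ $|F_7|=19027$) for that route to yield a clean polynomial certificate for optimal greyscales. The fix is that your decision problem does not require an \emph{optimal} greyscale as witness: since the first component of $cont_{max}(G)$ equals $\max_f\min_{e\in E}\widehat{f}(e)$, the answer to ``$lt_G\ge q$?'' is yes if and only if \emph{some} greyscale satisfies $\min_{e\in E}\widehat{f}(e)\ge q$, and by the construction in the proof of Theorem \ref{th:chi-1} one may take $f(v)=\Phi(v)/r$ for a proper colouring $\Phi$ with $r+1=\chi(G)\le 1+\lfloor 1/q\rfloor$ colours. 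This certificate has image in $I_r$, encoding size $O(n\log n)$, and is verifiable in linear time, so membership in NP survives --- but not for the reason you cite.
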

\begin{flushright}
$\square$
\end{flushright}

\section{ Set of values of maximum contrast greyscales}\label{sec:values} 

In this section we give a procedure to
obtain the set of all possible values of any maximum contrast greyscale, and consequently, the components of the maximum contrast vector are determined.   
In accordance with Section \ref{sec:problem}, for any connected graph $G(V,E)$, 
it is known that  
the set $\{\frac{i}{k}: \, i= 0, 1, \dots, k\}$ with $k=\frac{1}{lt_G}$ is a subset of $Im(f)$ (see Lemma \ref{pricamino} and recall  $lt_G =\frac{1}{\chi(G)-1}$ by  Theorem \ref{th:chi-1}). 
 It is not difficult to find  
a graph for which the maximum contrast greyscale $f$ verifies $I_{k} \subsetneq Im(f)$. In  Figure~\ref{fig:veccontrast} (top left) a maximum contrast greyscale of the wheel is given: 
$f([0,1,2,3,4,5])=\left[1,0,\frac 12, 0, \frac 23, \frac
13\right]$, hence $I_3\subsetneq Im(f)$.

From the definition of pair of neighbours closest to a vertex and  statement 2 of   Lemma  \ref{pricamino}, we observe that for any  vertex $v$  with colour other than the extreme tones,  there exists a path of lenght at least 2  verifying that $v$ is an interior point of  that  path. Moreover, the vertices of such path are coloured with an increasing sequence of grey tones and all edges have the same grey tone.   We may  guess  that
  the set $Im(f)$ consists of some sets of values corresponding to the colours of such paths  and there are certain relations between those sets.  
   Next we study  this fact in a more deeply way as follows.

 We  start by  defining  some auxiliary notions  and presenting some technical results which are concerned with  properties 
 of discrete sets of numbers which may  be 
 the grey tones of some paths.  
 We emphasize that the analysis of these properties are made  independently  from the notion of greyscale of graphs. 
 
A sequence
$[y_0,y_1,\dots,y_r]\subset [0,1]$, with $r\ge 2$,  is said to be an \textit{$h$-step chain} 
of length $r$ in $[0,1]$ if  $y_i-y_{i-1}=h$  for $i=1,\dots,r$.
A number $y_i$ for $i\neq 0,r$ is named \textit{interior
point}  and  $y_0$ and $y_r$ are called \textit{extreme points} of the \textit{$h$-step chain}. 
Let us observe that any $h$-step chain is characterized by its extreme points $y_0$
and $y_r$ and its length  $r$, being $h=\frac {y_r-y_0}{r}$. 

A  set of numbers $F\subset [0,1]$ is said to be an
\textit{$h$-minimum-step-enchained set}  if $F$ verifies the
following assertions:

\begin{enumerate}
	\item[(1)]\label{cond:1} There is an $h$-step chain in $F$ whose extreme points are precisely $\{0,1\}$ and there is no  other $q$-step chain in $F$ 
	with extreme points  $\{0,1\}$ and $q< h$.
	\item[(2)]\label{cond:2} For every  $y\in F-\{0,1\}$ there exists  a $p$-step chain $P$ in $F$ with extreme points $y_1$ and $y_2$ and
	 such that  $y$ is an interior point in $P$ for some $p\geq h$.
	
	 If   $p>h$ and   $y_i\notin \{0, 1\}$, for $i=1$ or $2$,  then $y_i$  is an interior point of a $q_i$-step chain
	  with $h\leq q_i< p$.
\end{enumerate}
 
An  $h$-minimum-step-enchained set $F$ is  \textit{maximal} if
it is not a proper subset of another $h$-minimum-step-enchained set.

\vspace*{0.5cm}

Let us illustrate these  notions with an example.  $F=\{0, \frac{1}{4},  \frac{3}{8},\frac{1}{2},
\frac{5}{8}, \frac{3}{4}, 1\}$ is a $\frac{1}{4}$-minimum-step-enchained set containing the $\frac
12$-step chain $[0, \frac{1}{2}, 1]$, the $\frac 38$-step chains $[0, \frac{3}{8},
\frac{3}{4}]$ and $[\frac{1}{4}, \frac{5}{8}, 1 ]$, and the $\frac 14$-step chain $[0, \frac{1}{4},
\frac{1}{2},\frac{3}{4}, 1] $. $F$ also contains the $\frac 18$-step chain $[ \frac{1}{4},
\frac{3}{8},\frac{1}{2}, \frac{5}{8}, \frac{3}{4}] $.

\begin{lemma}\label{minimo}

Let  $F\subset [0,1]$  be an  $h$-minimum-step-enchained set. The following statements
hold:

\begin{enumerate}
 \item There exists $k\in N$, $k\ge 2$, such that $h=\frac 1k$.

 \item $I_k=\{ \frac ik \; : \; i=0,\dots, k\}\subset F$.

 \item $I_k$ is the only  $\frac 1k$-step chain in $F$.

\end{enumerate}

\end{lemma}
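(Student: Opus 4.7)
My plan is to prove the three items sequentially, with items 1 and 2 being essentially immediate from condition (1), and item 3 the substantive one using condition (2).

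For item 1, I would unpack condition (1). It gives an $h$-step chain $[y_0, y_1, \ldots, y_r]$ in $F$ with $y_0 = 0$, $y_r = 1$, $r \geq 2$, and $y_i - y_{i-1} = h$. Summing gives $rh = 1$, so $h = 1/r$; setting $k := r$ yields $h = 1/k$ and $k \in \mathbb{N}$ with $k \geq 2$. Item 2 is then immediate: this same chain, written out, is precisely $[0, 1/k, 2/k, \ldots, 1] = I_k$, and it lies in $F$ by (1). For item 3 under the reading ``the only $1/k$-step chain from $0$ to $1$ in $F$ is $I_k$'', the same explicit description suffices: any $1/k$-step chain with extremes $\{0,1\}$ is forced to consist of the $k+1$ points $0, 1/k, \ldots, 1$.

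To strengthen item 3 to the claim that $I_k$ is the unique $1/k$-step chain in $F$ without the restriction on extremes, I would argue by contradiction. Assume there is a $1/k$-step chain $C = [z_0, \ldots, z_s]$ in $F$ with $s \geq 2$ and some $z_j \notin I_k$. Then every $z_j$ shares a common nonzero residue $r = z_0 \bmod (1/k) \in (0, 1/k)$, so none of them lies in $I_k$. I would pass to the smallest element $w \in F$ with residue $r$ modulo $1/k$ and apply condition (2) to $w$: the $p$-step chain produced ($p \geq h$) has $w$ as an interior point, so $w - p \in F$ with $w - p < w$. If $p$ is an integer multiple of $h = 1/k$, then $w - p$ retains residue $r$ and contradicts the minimality of $w$. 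If $p > h$ is not a multiple of $h$, I would iterate the second clause of condition (2) on the extremes of this chain, refining $p$ to strictly smaller steps $q \in [h, p)$, until a chain of step exactly $h$ is reached, again producing a smaller element of $F$ with residue $r$ and contradicting minimality. The base case $w \in (0, 1/k)$ is ruled out directly from condition (2): such a $w$ cannot be interior of any $p$-step chain with $p \geq h = 1/k$, since its left neighbour would satisfy $w - p < 0$.

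The main obstacle is this refinement step: making precise how the recursive clause of condition (2) chains together so that the sequence of decreasing steps terminates at step exactly $h$, in a way that yields a genuine element of $F$ strictly below $w$ with the same residue $r$. The rest of the argument is direct from the definitions, so the bulk of the bookkeeping concentrates here.
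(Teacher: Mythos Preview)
Your arguments for items 1 and 2 are correct and coincide with the paper's proof.

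For item 3 your approach diverges from the paper's and runs into precisely the obstacle you flag. The iterative refinement cannot be made to work as stated: the second clause of condition~(2) produces a $q_i$-step chain through an \emph{extreme} $y_i$ of $P$, not through $w$. Since $p$ is not a multiple of $1/k$, those extremes $y_i = w \pm jp$ do not share the residue $r$ modulo $1/k$, so descending on the step size never returns you to an element of $F$ below $w$ with residue $r$. The argument genuinely stalls at that point, and there is no evident invariant carried by the recursion that would rescue it.

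The paper avoids this entirely with a two-line direct argument. It does not track residues; instead it shows that any $\tfrac{1}{k}$-step chain $C=[y_0,\dots,y_r]$ in $F$ must have $y_0=0$ and $y_r=1$, which forces $C=I_k$. The point is that $C$ has step $p=\tfrac{1}{k}=h$; if the extreme $y_0\notin\{0,1\}$, then the second clause of condition~(2) would require $y_0$ to be interior to a $q$-step chain with $h\le q<p=h$, i.e.\ $\tfrac{1}{k}\le q<\tfrac{1}{k}$, which is impossible. Symmetrically for $y_r$. So rather than chasing residues through an iteration, one simply observes that at the minimal step $p=h$ the second clause of condition~(2) becomes vacuous, forcing the extremes of such a chain into $\{0,1\}$. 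Adopting this observation eliminates your obstacle altogether.
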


\begin{proof}

By definition, there exists an  $h$-step chain of
length
  $k$ with extreme points $\{0, 1\}$.
 Since   $k\ge 2$, $[0,1]$ is divided into  $k$ subintervals of
length  $h=\frac{1}{k}$. Hence, $I_k$ is the set of points in this
$h$-step chain. This leads to statements  1 and  2.

Let $C=[y_0,y_1,\dots,y_r]\subset F$ be an $\frac 1k$-step chain
and let us suppose
 $y_0\neq 0$. Then, the second part of  assertion  (2) (definition of $h$-minimum-step-enchained set)  provides another
 $q$-step chain  verifying  $\frac 1k\leq q<\frac 1k$, which is impossible. In an analogous way, we reach a contradiction by supposing
 $y_r\neq 1$.
\end{proof}

Next  we design a recursive procedure that gives a maximal $\frac
1{k}$-minimum-step-enchained set  for each  $k\geq 2$. This set will be denoted by  $F_{k}$, later we will prove that it is unique. The procedure
 starts with the set $\{0,1\}$ and
adds all possible values  $y\in (0, 1)$ that guarantee the
definition of $\frac 1k$-minimum-step-enchained set. More particularly, the method adds all
possible $p$-step chains, with $p\geq\frac 1k$, in such a way
that its interior and extreme points verify  assertion (2) for minimum step equal to  $\frac 1{k}$.

We define the auxiliary  mapping $S_{H,k}$  which will help us in the checking of assertion
(2)  during the procedure given below.

Let  $H\subset [0,1]$ be a finite set of numbers and $k\in N$ with
$k\ge 2$.  The function $S_{H,k}:H\longrightarrow [0,1]$ is defined
by:
$$S_{H,k}(y)=\left\{\begin{array}{l}
              \min\{p\ge \frac 1k\ \ : y \ \text{is an interior point for some $p$-step chain in $H$  }\\\text{\ \ \ \ \ verifing assertion (2)}\}\\
              0 \  \text{otherwise}
             \end{array}\right.$$

The following procedure starts  with the set  $H=\{0,1\}$ and adds
  possible values of  new $p$-step chains
$[y_0,\dots,y_r]$ with $p\geq \frac 1{k}$ recursively whenever
$\max\{S_{H,k}(y_0),S_{H,k}(y_r)\}<p$ is true. These new values are stored in the set $H$ and $S_{H,k}$  must be
actualized in each step. The algorithm ends when it is not possible the addition of a
new  interior point of a chain that  satisfies  assertion (2) of the definition of $h$-minimum-step-enchained set.

\vskip.3cm
\textsc{Procedure:} \textsc{maximal enchained set (mes)}

\textbf{Input:} A natural number $k\ge 2$.

\textbf{Output:} The maximal $\frac 1{k}$-minimum-step-enchained set, $F_{k}$.

\begin{enumerate}
 \item Initialize $H,\ New \leftarrow  \{0,1\}$

 \item Initialize $S_{H, k}(y)= 0$ for all $y\in H$

 \item  \textbf{While} $|New|>0$ \textbf{do}

  \begin{enumerate}

   \item Initialize $New \leftarrow  \emptyset$

   \item \textbf{For each} $\{y_1,y_2\}\subset H$  \textbf{do}

   \begin{enumerate}

      \item Initialize $r \leftarrow 2$

      \item Initialize $p \leftarrow \frac{|y_2-y_1|}r$

      \item  \textbf{While} $p\ge \frac 1{k}$ \textbf{do}

      \begin{enumerate}

           \item  \textbf{If} $p>\max\{S_{H,k}(y_1),S_{H,k}(y_2)\}$ \textbf{do}

           \hskip 0.65cm  I.  Make $C_r$ the $p$-step chain with extremes  $\{y_1,y_2\}$.

           \hskip 0.5cm  II. \textbf{For each} $y \in C_r-\{y_1,y_2\}$ \textbf{do}

              \hskip 1.05cm  $\alpha$. \textbf{If} $y \notin H$ \textbf{then} $S_{H,k}(y)=p$

              \hskip 1cm  $\beta$. \textbf{If} $S_{H,k}(y)> p$ \textbf{then} $S_{H,k}(y)=p$

           \hskip 0.42cm  III. Actualize $New \leftarrow (New\cup (C_r-H))$

           \item Actualize $r \leftarrow r+1$

           \item Actualize $p \leftarrow \frac{|y_2-y_1|}r$

        \end{enumerate}

   \end{enumerate}

   \item Actualize $H \leftarrow (H\cup New)$

   \end{enumerate}

 \item \textbf{return} $F_{k}=H$

\end{enumerate}

Since the procedure starts with  two fixed numbers, these are $\{0,1\}$, and each step is deterministic,  the uniqueness of its output is straightforwardly  deduced. Notice that this affirmation is true whenever the finiteness of the procedure is proven (Theorem \ref{th:finite}).  
By construction,  the actualized $H$ after the execution of  Step 3.(c) of the procedure is a $\frac 1k$-minimum-step-enchained set. 
The output of the algorithm is the maximal  $\frac 1k$-minimum-step-enchained set, $F_k$, due to  the fact that every pair of possible values for $H$ are revisited in Step 3.(b) until  no  new value can be adjoined to  $H$.  
 Moreover, $F_k$ is a rational set due to the fact that only rational numbers are adjoined to $C_r$ in Step 3.(b)iii.A.I  of  the algorithm.

The finiteness of $F_k$ (and consequently of the procedure)
deserves a  detailed analysis which starts with the following technical lemma.  Let us establish   some notation.
First of all,   $F_k$ can be  described as $F_k=\cup_{i=0}^{\infty} A_i$ where
$A_0=\{0, 1\}$ and, for $i\geq 1,$
$$A_i =\{y\in F_k -\cup_{j=0}^{i-1}A_j \, :\, y \,\mbox{verifies assertion (2) such that the extremes} \, y_1 \,  \mbox{and} \, y_2$$
$$ \mbox{of the existing $p$-step chain satisfy}\,  y_1, y_2\in \cup_{j=0}^{i-1}A_j\}.$$ 

 For the sake of simplicity, from now on we will denote  $F_k^i=\cup_{j=0}^i A_j$. 
 
\begin{lemma}\label{lem:symetry}
	The following statements hold for any $A_i$ with $i\geq 1$:
	\begin{enumerate}
		\item Every  number  $y\in A_i$  is an interior point for a $p$-step chain in $F_k$ with extreme points $y_1<y_2$ and $\{y_1, y_2\}\cap A_{i-1}\neq \emptyset$, where $p=S_{F_k^i,k}(y)$.
		
		\item Symetry property: if $y\in A_i$ then $1-y\in A_i$ and $S_{F_k^{i},k}(y)=S_{F_k^{i},k}(1-y)$.
		
		\item  $A_i$ is a finite set.
		
		\item  Let be $p_i= \min \{S_{F_k^{i},k}(y),\, :\;  y\in A_i\}$, then $p_i>p_{i-1}$, where $p_0=0$.

	\end{enumerate}
	
\end{lemma}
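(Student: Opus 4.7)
My plan is to prove all four assertions simultaneously by strong induction on $i$. The base case $i = 0$, with $A_0 = \{0, 1\}$, $F_k^0 = A_0$, $p_0 = 0$, makes every assertion vacuous or trivial. For the inductive step I address Part 1 first. Given $y \in A_i$, the defining property of $A_i$ furnishes a $p$-step chain in $F_k$ with extremes $y_1 < y_2$ in $F_k^{i-1}$ and $y$ as interior point; pick one with $p$ minimal. A short argument using the inductive definition shows that every other interior point of this chain also lies in $F_k^i$, so the chain sits entirely in $F_k^i$ and $p = S_{F_k^i, k}(y)$. If both extremes belonged to $F_k^{i-2}$, this same chain would certify $y \in A_j$ for some $j \leq i - 1$, contradicting $y \in A_i$; hence $\{y_1, y_2\} \cap A_{i-1} \neq \emptyset$. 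For Part 3, the induction hypothesis gives that $F_k^{i-1}$ is finite, so only finitely many extreme pairs $y_1 < y_2$ are available; for each pair the step condition $p = (y_2 - y_1)/r \geq 1/k$ forces $r \leq k$, leaving finitely many candidate chains with finitely many interior points, so $A_i$ is finite.

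For Part 2 I appeal to a structural symmetry of the \textsc{mes} procedure: the seed $\{0, 1\}$ is invariant under $y \mapsto 1 - y$, the for-each loop treats $\{y_1, y_2\}$ as an unordered pair, and reflection carries any $p$-step chain to a $p$-step chain. A routine induction on the while-loop iterations then shows that $H$, $New$ and $S_{H, k}$ all remain invariant under reflection throughout execution, yielding $1 - A_i = A_i$ and $S_{F_k^i, k}(1 - y) = S_{F_k^i, k}(y)$. For Part 4, take $y \in A_i$ with witnessing chain $C$ of step $p = S_{F_k^i, k}(y)$ (Part 1), and select an extreme $y_j \in A_{i-1}$ of $C$. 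The second clause of assertion (2), applied to $y_j$ as an extreme of $C$ (with $y_j \notin \{0, 1\}$ when $i \geq 2$), forces $y_j$ to be interior to some chain of step $q < p$; by Part 1 for stage $i - 1$ this chain can be taken in $F_k^{i-1}$, so $q \geq S_{F_k^{i-1}, k}(y_j) \geq p_{i-1}$. Therefore $p > p_{i-1}$, and finiteness of $A_i$ gives $p_i > p_{i-1}$; the case $i = 1$ is trivial since $p_1 \geq 1/k > 0 = p_0$.

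The main technical obstacle is the identity $p = S_{F_k^i, k}(y)$ in Part 1 together with the parallel claim in Part 4 that the witnessing chain for $y_j$ can be taken inside $F_k^{i-1}$. Both require verifying that the \textsc{mes} procedure's strict guard and update rule for $S_{H, k}$ commit enough information at each stage for the minimum-step chain through any $y \in A_i$ to live entirely in $F_k^i$ and, dually, for any $y_j \in A_{i-1}$ to have its minimum-step chain entirely in $F_k^{i-1}$. Once that bookkeeping is settled, the remaining pieces follow from reflective symmetry, the length bound $r \leq k$, and the strict guard.
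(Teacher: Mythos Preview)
Your approach is essentially the paper's: induction on $i$, the same counting bound for Part~3, and the same mechanism for Part~4 (pick an extreme in $A_{i-1}$, use the strict guard to get $S_{F_k^i,k}(y)>S_{F_k^{i-1},k}(y_j)\ge p_{i-1}$). The only genuine variation is Part~2: the paper reflects the witnessing chain directly and applies the induction hypothesis to its extremes $y_1,y_2\in F_k^{i-1}$, whereas you argue via the reflective invariance of the \textsc{mes} procedure's state $(H,New,S_{H,k})$. Both work, but the paper's route is slightly cleaner since the $A_i$ are defined set-theoretically and you would otherwise need to check that the procedural iterates $H_i$ coincide with the $F_k^i$. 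As for your ``main technical obstacle'' (that the minimum-step witnessing chain for $y\in A_i$ lives in $F_k^i$, and likewise one level down for Part~4), the paper does not address it at all: its entire proof of Part~1 is the sentence ``Statement~1 holds by definition of the set $A_i$'', and in Part~4 it simply asserts $S_{F_k^i,k}(y)>\max\{S_{F_k^{i-1},k}(y_1),S_{F_k^{i-1},k}(y_2)\}$ without further comment. So on this point you are being more scrupulous than the source.
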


\begin{proof}
	
	Statement  1 holds by definition of the set $A_i$.
	Statement 2 is demonstrated by induction over $i$:  	
	 it is  trivially  true for $A_0=\{0,1\}$ and let us suppose that it is also true for $A_j$ with $j\leq i$. Let us consider  $y\in A_{i+1}$. Then, $y$ is an interior point of a $p$-step chain with extreme points $y_1<y_2$, such that  $y_s\in F_k^i$  and
	$S_{F_k^{i+1},k}(y)>\max \{(S_{F_k^i,k}(y_1), S_{F_k^i,k}(y_2)\}$ for $s=1,2$. From the induction hypothesis, $1-y_s\in F_k^i$ and
	$S_{F_k^i,k}(1-y_s)=S_{F_k^i,k}(y_s)$, for $s=1, 2$. We make the $p$-step chain with extreme points $1-y_2<1-y_1$. Then, $1-y$ is an  interior point and $S_{F_k^{i+1},k}(1-y)=S_{F_k^{i+1},k}(y)>\max \{S_{F_k^i,k}(y_1), S_{F_k^i,k}(y_2)\}=\max \{S_{F_k^i,k}(1-y_1), S_{F_k^i,k}(1-y_2)\}$, therefore  $1-y\in A_{i+1}$.
	
	Statement 3 is demonstrated by induction over $i$: it is true for $i=0$, since $|A_0|=2$. Let us  suppose $|A_j|<+\infty$ for $j\leq i$, then $|F_k^i|=|\cup_{j=0}^{i}A_j|\leq |A_0|+|A_1|+\dots +|A_i|<+\infty$. According to  Step 3 (b) iii.A.I. of {\sc mes} procedure, in $A_{i+1}$ there are at most 	
	$\binom{|F_k^i|}{2}$ chains with length $2,3,\dots k$. 
	Therefore, $|A_{i+1}|$ has at most
	$$(1+ 2+\dots+(k-1))\binom{|F_k^i|}{2}= \frac{(k)(k-1)}2 \binom{|F_k^i|}{2}$$
	elements, that is, $|A_{i+1}|<+\infty$.
	
	For the proof of  statement 4, let us observe that if $y\in A_i$ then there is some $h$-step chain with extreme points $y_1<y_2$ and $h=S_{F_k^i,k}(y)>\max \{S_{F_k^{i-1},k}(y_1), S_{F_k^{i-1},k}(y_2)\}$ and from statement 2, $y_1$ or $y_2$ belongs to $A_{i-1}$, hence $S_{F_k^{i-1},k}(y_s)\geq p_{i-1}$ for some $s=1,2$. Then,
	$S_{F_k^i,k}(y)>\max \{S_{F_k^{i-1},k}(y_1), S_{F_k^{i-1},k}(y_2)\}\geq p_{i-1}$, that is,
	$S_{F_k^i,k}(y)>p_{i-1}$ for each $y\in A_i$. Since $|A_i|<+\infty$, it must be $p_i=\min \{S_{F_k^i,k}(y),\, \mbox{such that}\; y\in A_i\} >p_{i-1} $.
\end{proof}

\begin{thm}\label{th:finite}
	$F_k$ is a finite set for every $k\geq 2$.
\end{thm}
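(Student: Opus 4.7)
My plan is to argue by contradiction. Suppose $F_k$ is infinite. Since each $A_i$ is finite by Lemma \ref{lem:symetry}.3, the layers $A_i$ must be nonempty for infinitely many $i$, and Lemma \ref{lem:symetry}.4 then produces a strictly increasing sequence $p_1 < p_2 < p_3 < \cdots$ trapped inside the compact interval $[1/k, 1/2]$, converging to some limit $p^* \leq 1/2$.

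I would first extract geometric constraints from the chains that witness each $p_i$. Because $r_i p_i = y_2^{(i)} - y_1^{(i)} \leq 1$ and $p_i \geq 1/k$, the chain length $r_i$ lies in the finite set $\{2, \ldots, k\}$, so after passing to a subsequence I may assume $r_i$ is constantly equal to some $r^*$, and then $y_2^{(i)} - y_1^{(i)} = r^* p_i \to r^* p^*$. Lemma \ref{lem:symetry}.1 places at least one extreme $y_s^{(i)}$ in $A_{i-1}$, and the maximality condition in Step~3.(b).iii.A of procedure \textsc{mes} gives $S_{F_k^{i-1}, k}(y_s^{(i)}) < p_i$ for $s = 1, 2$. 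Iterating this produces, for each $i$, a descending chain in the dependency tree of \textsc{mes} whose step sizes strictly decrease from $p_i$ down toward $1/k$ and whose root lies in $\{0,1\}$.

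A key structural lemma I would establish is that every $y \in F_k \setminus \{0,1\}$ satisfies $1/k \leq y \leq 1 - 1/k$: the expression $y = y_1 + jp$ with $y_1 \geq 0$, $j \geq 1$, and $p \geq 1/k$ gives the lower bound, and the symmetry in Lemma \ref{lem:symetry}.2 supplies the upper bound. Thus $F_k \subset \{0\} \cup [1/k, 1-1/k] \cup \{1\}$, and refining this argument within the outer bands $[1/k, 2/k)$ and $(1-2/k, 1-1/k]$ shows that any such element must be precisely the step of a chain rooted at $0$ or $1$, with the other extreme forced to be another element of $F_k$. This sharply restricts which step sizes are admissible near the extremes, and in particular rules out infinite families of chains whose extremes lie close to $0$ and $1$.

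The main obstacle, and the part that will demand the most care, is converting this local structural information into a uniform bound. I would aim to show that every element of $F_k$ is a rational with denominator (in lowest terms) dividing a fixed integer $N(k)$ depending only on $k$. A naive induction on the iteration index yields only an exponential bound of the form $k^i$, which is useless; the sharper bound must exploit the strict inequality $p_i > \max\{S_{F_k^{i-1}, k}(y_1^{(i)}),\, S_{F_k^{i-1}, k}(y_2^{(i)})\}$ to prevent the operation $y = y_1 + j(y_2-y_1)/r$ with $r \leq k$ from reintroducing the same prime factors at every level. Once such an $N(k)$ is available, the inclusion $F_k \subset \{j/N(k) : 0 \leq j \leq N(k)\}$ is automatic and immediately contradicts the assumption that $F_k$ is infinite, completing the proof.
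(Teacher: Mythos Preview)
Your setup is fine and matches the paper's: assume $F_k$ infinite, use Lemma~\ref{lem:symetry} parts 3 and 4 to get a strictly increasing sequence $p_1 < p_2 < \cdots$ in $[1/k,1/2]$ converging to some $p^*$. Your structural lemma $F_k\setminus\{0,1\}\subset[1/k,1-1/k]$ is also correct and easy.

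The gap is in your ``main obstacle'' paragraph. You propose to show that every element of $F_k$ has denominator dividing a fixed integer $N(k)$, but you do not actually do it: you concede that the naive induction gives only $k^i$, and then you assert that the strict inequality $p_i>\max\{S_{F_k^{i-1},k}(y_1^{(i)}),S_{F_k^{i-1},k}(y_2^{(i)})\}$ should ``prevent the operation from reintroducing the same prime factors at every level.'' No mechanism is offered for why strict monotonicity of step sizes controls prime factorizations of $(y_2-y_1)/r$; in fact the tables for $F_5,F_6,F_7$ already show denominators such as $40$, $72$, $112$, $63$ appearing, so there is no obvious arithmetic rigidity to exploit. This is precisely the step that carries the whole argument, and it is not there.

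The paper's proof takes a completely different route and never touches denominators. It exploits the symmetry $y\mapsto 1-y$ to show that whenever $a\in A_i$ with $a<\tfrac13$, the point $c=\tfrac{1-a}{2}$ lies in $A_{i+1}$ and satisfies $|c-\tfrac13|=\tfrac12|a-\tfrac13|$; this forces $\tfrac13$ to be an accumulation point of $F_k$ and hence $p^*=\tfrac13$. Once $p_i\to\tfrac13$, a short metric argument finishes: for $y\in A_{m+1}$ one stacks two steps of size $\ge p_{m+1}$ and one of size $\ge p_m$, obtaining a span exceeding $1-3\epsilon$, which for small $\epsilon$ contradicts the available room in $[0,1]$ (using a bound of the type you stated, $F_k\setminus\{0,1\}\subset[1/k,1-1/k]$). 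The key idea you are missing is this identification of the limit $p^*=\tfrac13$ via the halving map; without it, your compactness and subsequence extraction do not lead anywhere.
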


\begin{proof}
	By reductio ad absurdum, let us  suppose $|F_k|=+\infty$. Since  $F_k=\cup_{i=0}^{\infty} A_i$ and each set $A_i$ is  finite, then it must be $A_i\neq \emptyset$ for all $i$, and hence, $\{p_i\}_{i=0}^{+\infty}$ is an infinite strictly increasing succession of numbers with trivial upper bound $p_i<\frac 12$. Then, there exists $\lim_{i\rightarrow +\infty}p_i=p$.

	On the other hand, for each $a\in A_i$ and $0<a<\frac 13$, by the symetry property, the number $b=1-a>\frac 23$ lies in $A_i$  and $c=\frac{b}2=\frac{1-a}2 \in A_{i+1}$ because $c$ is an interior point of the $c$-step chain $[0,c,b]$ with $S_{F_k,k}(c)=c>\frac 13>S_{F_k,k}(b)=S_{F_k,k}(a)$. Let us observe that  $(c-\frac 13)=\frac{1-a}2-\frac 13=\frac{\frac 13-a}{2}$, that is, for every $a\in A_i$ with $a<\frac 13$ there exists  $c\in A_{i+1}$ with  $|c -\frac 13|=\frac 12 |a -\frac 13|$. Under the assumption $F_k$ is an infinite set, then value $\frac 13$ is an accumulation point in $F_k$ and then, $p=\lim_{i\rightarrow +\infty}p_i=\frac 13$.
	
	Now, by definition of limit, for all $\epsilon>0$, there is some $m$ such that if $i\geq m$ then $\frac 13-p_i<\epsilon$. Let be $y\in A_{m+1}$, then $y$ is an interior point of an $h$-step chain with extreme points $y_1<y_2$ and some $y_s\in A_m$ for $s=1,2$, where $h\geq p_{m+1}$. Let us suppose $y_2\in A_m$, then $y_2$ is an interior point of a $q$-step chain with extreme points $y_3<y_4$, where $q\geq p_{m}.$  Then $0<y_1<y<y_2<y_4<1$ and $y_4-y_1=(y_4-y_2)+(y_2-y)+(y-y_1)\geq p_{m+1}+p_{m+1}+p_m>3(\frac13-\epsilon)=1-3\epsilon$. recall that  $y_4-y_1\geq 1-\frac 1k$, and then, 
	$1-3\epsilon<1-\frac 1k$, so $\epsilon>\frac{1}{3k}$ which is a contradiction. Therefore, the assumption is false and the set  $F_k$ is  finite for every $k\geq 2$.
\end{proof}

As a consequence of Theorem \ref{th:finite}, if $H_i$ denotes the set $H$ after $i$ loops in {\sc mes} procedure, then the number of loops is finite because $F_k$ is finite. This demonstrate the finiteness of the procedure. On the other hand, from the proof  of statement  3 in Lemma~\ref{lem:symetry}, let us remark that $H_i$ can be computed in $\mathcal{O}(|H_ i|)$ time
complexity. This leads to an exponential time complexity in the worst case for large values of $k$. Nevertheless, it is worth to run the \textsc{mes} procedure for $k$ from 2 to 7. 
This  will allow us to obtain some examples of  maximum contrast of graphs as it will be shown below.  The following sets are
obtained: \vskip.3cm

$F_2=\{0,\frac 12,1\}$
\vskip.3cm

$F_3=\{0,\frac 13,\frac 12, \frac23,1\}$
\vskip.3cm

$F_4=\{ 0\, \frac{1}{4}, \frac{1}{3}, \frac{3}{8},\frac{1}{2}, \frac{5}{8}, \frac{2}{3},\frac{3}{4}, 1\}$
\vskip.3cm

$F_5= \{0, \frac{1}{5}, \frac{1}{4}, \frac{4}{15},
\frac{3}{10}, \frac{1}{3}, \frac{7}{20},
\frac{11}{30}, \frac{3}{8}, \frac{2}{5},
\frac{7}{15}, \frac{19}{40}, \frac{1}{2},
\frac{21}{40}, \frac{8}{15}, \frac{3}{5},
\frac{5}{8}, \frac{19}{30}, \frac{13}{20},
\frac{2}{3}, \frac{7}{10}, \frac{11}{15},
\frac{3}{4}, \frac{4}{5}, 1\}$

\vskip.3cm

$F_6= \{0, \frac{1}{6}, \frac{1}{5}, \frac{5}{24}, \frac{2}{9},
\frac{1}{4}, \frac{7}{27}, \frac{19}{72}, \frac{4}{15},
\frac{5}{18},\dots \}$
\vskip.3cm

$F_7=\{0, \frac{1}{7}, \frac{1}{6}, \frac{6}{35}, \frac{5}{28},
\frac{4}{21}, \frac{1}{5}, \frac{17}{84}, \frac{23}{112},
\frac{13}{63},\dots \}$
\vskip.3cm

The sets $F_6$ ad $F_7$ are collected in a data file which is available at \cite{cas-ga-ri-ro-vi-16}.  Table \ref{tab:cardinal}  shows the increase of  $F_k$ cardinality. Note that  the  cases for  $k\ge 6$ are particularly significant.
  \vskip.3cm

\begin{table}[h]
\begin{center}
\begin{tabular}{|l || c | c | c | c | c | c |}
\hline
k & 2 & 3 & 4 & 5 & 6 & 7  \\
\hline
$|F_k|$ & 3 & 5 & 9 & 25 & 145 & 19027\\
\hline
\end{tabular}\end{center}
\caption{Cardinality  of some $F_k$.}\label{tab:cardinal}

\end{table}

With these technical results we have 
established some properties of discrete sets of numbers that will be used to demonstrate the main theorem of this section.
 The next theorem gives a fundamental property of the set
$Im(f)$ for any  maximum contrast greyscale $f$ of a given graph $G$ with known chromatic number
$\chi (G)$.

\begin{thm}\label{values}

 Let $G(V,E)$ be a connected graph with chromatic number $\chi (G)=k+1$. For any  maximum contrast greyscale $f$ of
 $G$, $Im(f)$ is a $\frac 1k$-minimum-step-enchained
 set and $\ I_k\subset Im(f)\subset F_k$  is verified.
\end{thm}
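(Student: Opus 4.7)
My plan is to verify the two axioms defining a $\frac{1}{k}$-minimum-step-enchained set for $F=Im(f)$, and then to deduce $Im(f)\subseteq F_k$ by matching this stratification to the one produced by the \textsc{mes} procedure, which exhausts all admissible values.

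First I would dispatch axiom~(1). Lemma~\ref{pricamino} already gives $I_k\subseteq Im(f)$, supplying a $\frac{1}{k}$-step chain with extremes $\{0,1\}$. To rule out any finer such chain, I would observe that every $y\in Im(f)\cap(0,1)$ satisfies $\frac{1}{k}\le y\le 1-\frac{1}{k}$: writing $y=f(v)$, Lemma~\ref{previo} provides a left neighbour $u_1$ with $f(u_1)<y$ and $y-f(u_1)\ge lt_G=\frac{1}{k}$; since $f(u_1)\ge 0$ we get $y\ge\frac{1}{k}$, and symmetrically $y\le 1-\frac{1}{k}$. Consequently no element of $Im(f)$ lies in $(0,\tfrac{1}{k})\cup(1-\tfrac{1}{k},1)$, so any $q$-step chain from $0$ to $1$ contained in $Im(f)$ necessarily has $q\ge \frac{1}{k}$.

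Next I would handle axiom~(2). Fix $y=f(v)\in Im(f)-\{0,1\}$. If $y\in I_k$, I take the witness chain to be $I_k$ itself, which has step $p=h$ and makes the conditional clause vacuous. Otherwise, Lemma~\ref{previo} yields a $3$-element chain $[f(u_1),y,f(u_2)]$ of step $p\ge h$; moreover $p>h$, since $p=h$ would allow the bilateral extension argument inside the proof of Lemma~\ref{pricamino} to build an incremental path of length $k$ through $v$, forcing $y\in I_k$, a contradiction. The hard part will be the recursive clause: for each extreme $f(u_i)\notin\{0,1\}$, I must exhibit a chain with $f(u_i)$ as interior point and step $q_i\in[h,p)$. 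If $f(u_i)\in I_k$, the chain $I_k$ itself furnishes $q_i=h<p$. Otherwise I apply Lemma~\ref{previo} at $u_i$, obtaining a chain of step $q_i=\min_{u\in N(u_i)}\widehat{f}(\{u,u_i\})\le p$; strict inequality $q_i<p$ is not automatic because $v\in N(u_i)$ already contributes contrast exactly $p$. I plan to handle this borderline case by either selecting an alternative pair of closest neighbours at $u_i$ distinct from $v$, using the freedom afforded by the existence-only statement of Lemma~\ref{previo} to trade $v$ for a witness of strictly smaller step, or, if no such alternative exists, by performing a local perturbation of $f$ analogous to the $f_{v^\pm}$ construction in the proof of Lemma~\ref{previo} to contradict the maximality of $f$.

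Finally, with $Im(f)$ established as $\frac{1}{k}$-minimum-step-enchained, the inclusion $Im(f)\subseteq F_k$ would follow by stratifying $Im(f)$ parallel to the layers $A_i$ of $F_k$: start with $B_0=\{0,1\}$ and, for $i\ge 0$, place in $B_{i+1}$ those elements of $Im(f)$ whose witness chain (from axiom~(2)) has extremes lying in $\bigcup_{j\le i}B_j$. Axiom~(2) guarantees every element of $Im(f)$ sits in some $B_i$; the recursion terminates by finiteness of $Im(f)$, and at each layer the element $y$ being adjoined satisfies precisely the test $p>\max\{S_{H,k}(y_1),S_{H,k}(y_2)\}$ that governs insertion in the \textsc{mes} procedure. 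Hence each $y\in Im(f)$ is eventually added by \textsc{mes}, giving $y\in F_k$ and completing the proof.
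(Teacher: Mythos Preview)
Your treatment of axiom~(1) is fine, and actually a bit cleaner than the paper's. The inclusion $Im(f)\subset F_k$ at the end also goes through, though in the paper it is a one-liner: once $Im(f)$ is $\frac{1}{k}$-minimum-step-enchained, maximality of $F_k$ gives the containment directly, so the explicit stratification $B_i$ is not needed.

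The genuine gap is in your verification of axiom~(2), precisely at the borderline case $q_i=p$ that you yourself flag. Neither of your proposed remedies works. Lemma~\ref{previo} asserts that the pair of closest neighbours realises the \emph{minimum} edge contrast at $u_i$; if that minimum equals $p$ there is simply no witness of strictly smaller step available at the vertex $u_i$, so ``trading $v$ for a smaller-step witness'' is impossible. The perturbation idea fails for the same reason the perturbation in Lemma~\ref{previo} \emph{succeeds}: when $u_i$ has neighbours at distance exactly $p$ on both sides, the configuration is already balanced, and moving $f(u_i)$ in either direction \emph{decreases} some edge contrast below $p$ rather than producing an improvement. So no contradiction to maximality arises.

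The paper's remedy is not to force $q_i<p$ at the original extreme $f(u_i)$, but to \emph{extend} the $p$-step chain past it until one reaches an extreme where the strict inequality does hold (or one hits $0$ or $1$). Concretely, the paper introduces the auxiliary function $C(v)=\min_{w\in f^{-1}(f(v))}\min_{u\in N(w)}\widehat{f}(\{u,w\})$, the minimum edge contrast taken over the entire colour class of $v$. Starting from a realising vertex $w$ with $f(w)=y$ and local minimum $p=C(v)$, one obtains the $p$-step chain $[f(u_1),y,f(v_1)]$; since $w\in N(u_1)$ one has $C(u_1)\le p$. If $C(u_1)=p$, one picks a vertex $w_2$ with $f(w_2)=f(u_1)$ realising this minimum, applies Lemma~\ref{previo} there, and extends the chain in $Im(f)$ one step further to the left. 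Iterating, the left extreme eventually satisfies $C(u_{r_1})<p$ or $f(u_{r_1})=0$, and symmetrically on the right. The final chain then has extremes which are either in $\{0,1\}$ or carry $C$-value strictly below $p$, and that strictly smaller $C$-value furnishes the required $q_i$-step chain. The key device you are missing is this extension step, and the function $C$ (minimum over the whole colour class, not just at the single vertex $u_i$) is what makes the extension go through cleanly.
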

 \begin{proof}
Let us consider an arbitrary maximum contrast greyscale $f$ of
$G$.  Firstly, we   show that  $Im(f)$ satisfies  assertion (1)  in the  definition of  $\frac 1k$-minimum-step-enchained
set. From Lemma \ref{pricamino} and Theorem \ref{th:chi-1}  it holds that 
 $I_k \subset Im(f)$   is a $\frac 1k$-step chain with extreme points $\{0, 1\}$, where $k=\chi (G)-1$.
  Suppose there exists a $q$-step chain with extreme points $\{0, 1\}$ and $q< \frac 1k$, namely $[0, q, ..., 1]$. Then, $q=f(v)$ for some $v \in V$ and, by Lemma \ref{previo}, there exists $u_1 \in N(v)$, a neighbour closest to $v$ on the left. Hence, $f(u_1)<f(v)$ and $\widehat{f}(\{u_1,v\})\leq q < \frac 1k$, the lightest tone, which is a contradiction. Therefore, $I_k$ is the only $\frac 1k$-chain  with extremes $\{0,1\}$ and  assertion (1) holds.

  In order to prove assertion (2) in the  definition of  $\frac 1k$-minimum-step-enchained
 set we use the following auxiliary mapping  $C:V\longrightarrow [0,1]$ defined as follows:

  $$C(v)=\min_{w\in f^{-1}(f(v))} \{ \widehat{f}(\{u,w\}) \, : \;  u\in N(w) \} $$ 
  In other words, $C$ computes the minimum grey tone over the set of all edges incident in  vertices coloured with the  grey tone $f(v)$.

Given a value $y\in Im(f)-\{0,1\}$, let   $v$ be a vertex  such that
$y=f(v)$.  Let us  consider $C(v)$ and a vertex $w\in C^{-1}(C(v))$.
 Since  $f(w)=y\not\in \{0,1\}$, the hypotheses of Lemma \ref{previo} 
 are verified and hence there is a pair of neighbours closest to $w$, namely $u_1$ and $v_1$
such that  $f(u_1)<f(w)<f(v_1)$ and $\widehat{f}(\{u_1,w\})=\widehat{f}(\{v_1,w\})=p$, where $p=C(v)$.
 Therefore,  $[f(u_1),f(w)=y,f(v_1)]$ is a  $p$-step chain. Moreover, and since $w \in N(u_1)$, it is held that $C(u_1)\leq \widehat{f}(\{u_1,w\}) =C(w)=C(v)=p$  (see Lemma \ref{previo}); 
 analogously $C(v_1)\leq p$.

 Let us suppose  $C(u_1)=C(w)=p$ and $f(u_1)\neq 0$. A similar reasoning gives rise to   $w_2 \in C^{-1}(C(u_1))$ and there exists $u_2$, the neighbour  closest to $w_2$ on the left  
 such that
  $f(u_2)=f(u_1)-p$, and the above $p$-step chain is enlarged on the left as follows  $[f(u_2),f(u_1),f(w)=y,f(v_1)]$. This procedure can be repeated  $r_1$ times until  $C(u_{r_1})<C(w)=p$ or else  $f(u_{r_1})=0$. This way, a left extreme point
 for the $p$-step chain with interior  point $y$ is found. In summary, we obtain 
 $ [f(u_{r_1}),\hdots, f(u_2),f(u_1),f(w)=y,f(v_1)]$.
The case  $C(v_1)=C(w)=p$ and $f(v_1)\neq 1$ can be tackled
similarly and the   $p$-step chain is completed on the right in the form:
 $[f(u_{r_1}),\hdots, f(u_2),f(u_1),f(w)=y,f(v_1),f(v_2),\hdots,f(v_{r_2})]$ whose extreme points are 0 or 1 or otherwise $C$ reaches a value less than
 $p$.

Notice that, from the definition of lightest tone of $G$,  for every vertex $u$,   $C(u)\ge \frac 1k$ holds, in particular $C(v)=p\geq \frac 1k$.
Therefore,   every $y\in Im(f)-\{0,1\}$ is an interior point of a
$p$-step chain with $p\ge \frac 1k$ and whose extreme points are $f(u_{r_1})=0$ or else $C(f(u_{r_1}))=q_1<p$ or 
$f(v_{r_2})= 1$ or else  $C(f(v_{r_2}))=q_2<p$. 
In such a case, a similar construction  gives rise to  a $q_i$-step chain with   $q_i<p$ and $f(u_{r_1})$ or $f(v_{r_2})$ are the corresponding interior points.

 We conclude that  $Im(f)$ is  a $\frac 1k$-minimum-step-enchained
 set and from the maximality  of  $F_k$ it is reached that $Im(f)\subset F_k$ and the proof is finished.
\end{proof}
 
 As an  immediate consequence of Theorems \ref{th:finite} and \ref{values} we  reach the next corollary which assures that the notion  of {\it  maximum contrast vector}  is well defined.

\begin{cor}
	For any  graph $G$, the set $$\{cont(G,f)\, \mbox{such that} \,  f  \; \mbox{is a greyscale of }\, G\}$$  has a maximum.
	\end{cor}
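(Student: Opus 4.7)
My proof would combine a compactness/continuity argument for existence with Theorems \ref{th:finite} and \ref{values} to locate the maximizer. I work with a connected $G$; the disconnected case follows by componentwise extension, Theorem \ref{values} being applied to each component.

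Set $k=\chi(G)-1$. The space of greyscales $\mathcal G=\{f:V\to[0,1]:\{0,1\}\subseteq Im(f)\}$ is a closed, and therefore compact, subset of $[0,1]^V$, since $\{0,1\}\subseteq Im(f)$ is the union, over pairs $(v_0,v_1)\in V\times V$, of the closed slices $\{f(v_0)=0,\,f(v_1)=1\}$. The map $\Psi:f\mapsto cont(G,f)$ is continuous: each edge value $|f(u)-f(v)|$ is continuous in $f$, and rearranging $m$ numbers in ascending order is continuous on $\mathbb R^m$ (each order statistic is a continuous function of its arguments). Hence $S=\Psi(\mathcal G)\subset[0,1]^m$ is compact.

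A nonempty compact subset of $\mathbb R^m$ admits a lex-maximum: by induction on $m$, the first coordinate attains its supremum, the fiber over that value is closed in the compact set and hence compact, and one iterates on the remaining $m-1$ coordinates. Applied to $S$, this yields a greyscale $f^\ast\in\mathcal G$ realizing $\max_{\mathrm{lex}} S$.

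Finally, Theorem \ref{values} tells us $Im(f^\ast)\subseteq F_k$, and Theorem \ref{th:finite} that $F_k$ is finite; thus $f^\ast$ lies in the finite pool $\{g:V\to F_k : \{0,1\}\subseteq Im(g)\}$, which settles both the existence of $cont_{max}(G)$ and the fact that the maximum can actually be sought among finitely many greyscales. The only mildly nontrivial ingredient is the elementary lex-maximum lemma for compact subsets of $\mathbb R^m$; the two cited theorems then handle the finite reduction.
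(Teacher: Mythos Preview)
Your proof is correct and, in fact, more complete than what the paper offers. The paper gives no argument beyond declaring the corollary an ``immediate consequence'' of Theorems~\ref{th:finite} and~\ref{values}. But notice that Theorem~\ref{values} is stated for a \emph{given} maximum contrast greyscale: it presupposes existence and merely describes where such a greyscale must take its values. Read literally, deducing existence from it is circular. Your compactness argument closes that gap cleanly: $\mathcal G$ is a finite union of closed coordinate slices of the cube $[0,1]^V$, hence compact; $f\mapsto cont(G,f)$ is continuous because edge differences and order statistics are continuous; and any nonempty compact subset of $\mathbb R^m$ carries a lexicographic maximum by the coordinate-by-coordinate induction you sketch. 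Once existence is secured this way, Theorems~\ref{th:finite} and~\ref{values} can then be legitimately invoked to locate the maximizer in the finite pool of $F_k$-valued greyscales.

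The paper's implicit reasoning is presumably that the perturbation arguments behind Lemma~\ref{previo} through Theorem~\ref{values} show that any greyscale whose image is not contained in $F_k$ can be strictly improved, so the optimum must lie among finitely many candidates and therefore exists; but without an ingredient like your compactness step (or an equivalent termination argument for the improvement process), that line does not actually produce existence. Your route makes the logic airtight, and the last paragraph of your write-up is a bonus rather than a necessity: the corollary as stated needs only the compactness half.
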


  A natural question arises: we wonder if the bound set $Im(f)\subset
F_k$ for a given maximum contrast greyscale $f$ is tight or otherwise
there exists  $y\in F_k$ such that no graph $G(V,E)$ with $\chi
(G)=k+1$ and no maximum contrast greyscale $f$ of $G$  verifies
$y\in Im(f)$. For $k=2$ and the complete graph $G=K_3$, $Im(f)=F_2$. 
Figure \ref{fig:veccontrast} ((a) top) and Figure \ref{fig:K3-W6-G12V} show two graphs for which  $Im(f)=F_k$, with
$k=3,4$, respectively. A brute-force algorithm has been implemented for
these two graphs in order to check that a maximum contrast
greyscale has precisely these sets of values. Let us remark that in the latter case, the computation for  $k=4$ requires at
most $12^9$ combinations of values to test maximum contrast
greyscales. In the general case, we guess that the bound
$Im(f)\subset F_k$ is tight but constructing particular examples
verifying it is a difficult task since an amount of vertices  $|V|\ge|F_k|$ is required and
we are dealing with an NP-complete problem. \vskip 1cm

\begin{figure}[htb]

\begin{center}\includegraphics[width=6cm]{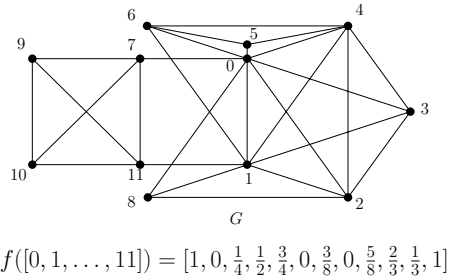} \caption{A maximum contrast greyscale 
 of the graph $G$ with $\chi(G)=5$ and  $Im(f)= F_4$. }
\label{fig:K3-W6-G12V}
\end{center}
\end{figure}

\section{Restricted maximum contrast problem on bipartite graphs}\label{sec:restricted}

In this section we consider the family of
bipartite graphs or equivalently 2-chromatic graphs. 
For these graphs we deal with the version of the maximum  contrast
problem in which the graph has several vertices initially coloured with
extreme tones 0 (white) and 1 (black)
and the rest of the vertices must be coloured by preserving those
initial colours. We name it  \textbf{$\{0,1\}$-Restricted maximum contrast on graphs } ($\{0,1\}$-\textsc{rmacg}) and it was
formally defined in Section \ref{sec:prelimi}. 

\par

Recall that $V_c$ denotes the set of vertices initially coloured, in particular, $V_i$ denotes the set of vertices initially coloured with extreme tone $i$, for $i=0, 1$. All over this section we consider that no pair of vertices in $V_i$ are adjacent, for $i=0,1$. 
Let us observe that in case that adjacent vertices are allowed to  be initially coloured with the same extreme tone, then the first component of the maximum contrast vector is 
 equal to zero. In this particular case, the solution of  $\{0,1\}$-\textsc{rmacg} problem is essentially the same as in the case in which the edges with grey tone equal to zero are removed from the graph. Hence it is worth to focus on the problem  with this extra condition of no adjacent vertices having the same initial colour.

 It suffices to revisit the proofs of  Lemma \ref{previo} and Proposition \ref{previo2} to realize that  similar results hold for the  $\{0,1\}$-\textsc{rmacg} problem, which are included next.
 
\begin{lemma}\label{lem:previo-restricted}
	Let  $G(V,E)$ be a connected graph and
	let   $f$ be a maximum contrast greyscale compatible with
	an incomplete $V_c$-greyscale $g$.
	Let $v\in V - V_c$ be a vertex  such that   $0<f(v)<
	1$, then there exist  $u_1$ and $u_2\in N(v)$  satisfying both
	following assertions:
	\begin{enumerate}
		\item $f(u_1)<f(v)<f(u_2).$
		\item $\widehat{f}(\{u_1,v\}) = \widehat{f}(\{u_2,v\}) =min \{\widehat{f}(\{u,v\}) : u\in N(v)\}.$
	\end{enumerate}
\end{lemma}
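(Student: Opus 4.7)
The plan is to adapt the proof of Lemma \ref{previo} almost verbatim, with the key observation that since $v\in V - V_c$, the value $f(v)$ is not fixed by the incomplete greyscale $g$, so we are free to perturb it while preserving compatibility. First I would set $a = \min\{\widehat{f}(\{u,v\}) : u \in N(v)\}$ and note that $0 < a < 1$ (strict positivity follows from $f$ being a maximum contrast greyscale, so $cont(G,f)$ has no zero component, and $a < 1$ is forced by $f(v) < 1$). Then partition $N(v) = M_v \cup N_v$ according to whether an edge realizes the minimum $a$ or exceeds it, and set $b = \min\{\widehat{f}(\{u,v\}) : u \in N_v\}$ (with $b=1$ if $N_v = \emptyset$).

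Next I would pick $\varepsilon > 0$ satisfying both $a + 2\varepsilon < b$ and $0 < f(v) - \varepsilon < f(v) + \varepsilon < 1$, and define the perturbed greyscales $f_{v^+}$ and $f_{v^-}$ that agree with $f$ on $V - \{v\}$ and shift the value at $v$ by $+\varepsilon$ or $-\varepsilon$, respectively. The essential new point, which is what distinguishes this from Lemma \ref{previo}, is that since $v \notin V_c$, the perturbed maps $f_{v^\pm}$ still satisfy $f_{v^\pm}(u) = g(u)$ for every $u \in V_c$, hence they remain \emph{compatible} with $g$ and are therefore admissible competitors to $f$ in the $\{0,1\}$-\textsc{rmacg} problem.

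With admissibility secured, the contradiction argument proceeds exactly as before. Assuming for contradiction that $f(u) \geq f(v)$ for every $u \in M_v$, the greyscale $f_{v^-}$ raises each edge grey tone in $M_v$ from $a$ to $a + \varepsilon$ while leaving all grey tones in $N_v$ still above $a + \varepsilon$ (by the choice of $\varepsilon$) and leaving all other edge grey tones unchanged. Lexicographically this strictly improves $cont(G, f)$, contradicting maximality within the compatible class. The symmetric assumption $f(u) \leq f(v)$ for all $u \in M_v$ is ruled out by the analogous argument with $f_{v^+}$. Therefore $M_v$ contains at least one $u_1$ with $f(u_1) < f(v)$ and at least one $u_2$ with $f(u_2) > f(v)$, and membership in $M_v$ yields $\widehat{f}(\{u_1, v\}) = \widehat{f}(\{u_2, v\}) = a$, establishing both conclusions.

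The only real obstacle is the admissibility check, which could fail if $v$ were forced to sit in $V_c$; the hypothesis $v \in V - V_c$ is precisely what makes the perturbation legitimate. Everything else is mechanical reuse of the unrestricted argument, so no new machinery is needed.
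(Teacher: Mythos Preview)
Your proposal is correct and matches the paper's approach exactly: the paper does not give a separate proof of Lemma~\ref{lem:previo-restricted} but simply states that it suffices to revisit the proof of Lemma~\ref{previo}, and you have carried out precisely that revisiting, correctly isolating the one new ingredient---that $v\notin V_c$ guarantees the perturbed maps $f_{v^\pm}$ remain compatible with $g$ and hence are admissible competitors.
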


\begin{prop}\label{Ernuevo}
	Let $G(V,E)$ be a connected graph and 	let   $f$ be a maximum contrast greyscale compatible with
	an incomplete $V_c$-greyscale $g$.  If there exists a vertex $v\in V$ such that $0<f(v)<1$, then  the first component of $\mathcal{C}_f$ is at most  $\frac12$. Moreover, if the first component of  $\mathcal{C}_f$ is equal to $\frac12$, then  $Im(f) =  \{0,\,  \frac{1}{2}, \, 1\}$.
\end{prop}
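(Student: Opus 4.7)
The plan is to adapt the proof of Proposition \ref{previo2} almost verbatim, swapping in Lemma \ref{lem:previo-restricted} for Lemma \ref{previo}. The first observation that makes this transparent is that compatibility of $f$ with $g$ forces $f(u)=g(u)\in\{0,1\}$ for every $u\in V_c$. Consequently, any vertex $v\in V$ with $0<f(v)<1$ must lie in $V-V_c$, which is exactly the hypothesis required by Lemma \ref{lem:previo-restricted}. So the restriction in that lemma is not actually a restriction for vertices of interest here.

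Given such a $v$, applying Lemma \ref{lem:previo-restricted} produces neighbours $u_1,u_2\in N(v)$ with $f(u_1)<f(v)<f(u_2)$ and
\[
f(v)-f(u_1)\;=\;f(u_2)-f(v)\;=\;\min_{u\in N(v)}\widehat{f}(\{u,v\}).
\]
Let $c$ denote the first component of $\mathcal{C}_f$, i.e.\ the minimum grey tone on any edge of $G$. Then both $f(v)-f(u_1)$ and $f(u_2)-f(v)$ are at least $c$, so
\[
1\;\ge\;f(u_2)-f(u_1)\;\ge\;2c,
\]
yielding $c\le\tfrac12$, which is the first statement.

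For the equality case $c=\tfrac12$, the chain of inequalities above must be tight: $f(u_2)-f(u_1)=1$ forces $f(u_1)=0$ and $f(u_2)=1$, and then $f(v)=\tfrac12$. To upgrade this to $Im(f)=\{0,\tfrac12,1\}$, I would re-run the same argument for every vertex $w$ with $0<f(w)<1$: each such $w$ is again in $V-V_c$, so Lemma \ref{lem:previo-restricted} applies to it and the same inequalities force $f(w)=\tfrac12$. Thus every vertex is coloured in $\{0,\tfrac12,1\}$; combined with $0,1\in Im(f)$ (built into the definition of greyscale) and $\tfrac12=f(v)\in Im(f)$, we obtain $Im(f)=\{0,\tfrac12,1\}$.

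There is no real obstacle here: the only conceptual check is the very first observation, namely that no vertex with fractional colour can be initially coloured, so the applicability domain of Lemma \ref{lem:previo-restricted} covers exactly the vertices we need. Everything else is the same elementary arithmetic as in Proposition \ref{previo2}.
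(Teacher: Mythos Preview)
Your proposal is correct and follows essentially the same approach as the paper, which explicitly notes that it suffices to revisit the proof of Proposition~\ref{previo2} with Lemma~\ref{lem:previo-restricted} in place of Lemma~\ref{previo}. Your one added observation---that any vertex with $0<f(v)<1$ automatically lies in $V-V_c$ because $g$ takes only extreme values---is exactly the point needed to justify invoking Lemma~\ref{lem:previo-restricted}, and the rest of the argument is identical to the paper's.
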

	
The pair of vertices $u_1$ and  $u_2$ associated to  a vertex $v$, given by Lemma \ref{lem:previo-restricted}, will be referred as the  pair of neighbours closest to
		$v$, the vertex $u_1$ is the neighbour closest to $v$ on
		the left and the vertex $u_2$ is  the neighbour closest
		to $v$ on the right.
	
From here on, unless other thing is stated, we consider the set   $V_c$  having all initially coloured vertices with extreme tones. 
Let us fix some  notation that will help in the remaining of this paper. Any 2-chromatic graph $G$ has precisely two 2-colourings, say $\phi_0$ and $\phi_1$, both using colours 0 and 1. Let $g$ be
an incomplete $V_c$-greyscale of $G$.
Let us partition the set
of initially coloured vertices as follows:  $V_c=V_{\phi_0}\cup V_{\phi_1}$, where $V_{\phi_0}$ denotes the
set of vertices of $V_c$ whose colour coincides with the colour assigned by the colouring $\phi_0$.
Analogously, $V_{\phi_1}$ denotes the set of vertices of $V_c$ whose colour coincides with the colour
assigned by the colouring $\phi_1$, and hence their colours are opposite to the colour assigned by
$\phi_0$. Let $f$ be a maximum  contrast greyscale  for the  $\{0,1\}$-\textsc{rmacg} problem.  Observe that
$V_c=V_{\phi_0}$ (equivalently $V_c=V_{\phi_1}$) implies $Im(f)=\{0, 1\}$, that is, $f=\phi_0$ ($f=\phi_1$,
respectively).   The case $V_{\phi_0}\neq \emptyset$ and $ V_{\phi_1}\neq
\emptyset$   will be also  studied.
In fact, both situations occur for certain bipartite graphs.

Next, we  give an upper bound set for $Im(f)$ being $f$  a maximum  contrast greyscale for the  $\{0,1\}$-\textsc{rmacg} problem on the  family of 2-chromatic graphs and the set
$V_c$ of arbitrary cardinality.

\begin{thm}\label{thm:restrictedvalues} Let $G(V, E)$ be a  2-chromatic connected graph and let   $f$ be a maximum contrast greyscale for the $\{0,1\}$-\textsc{rmacg} problem  on $G$. Then,   it is  verified that $Im(f)\subseteq
\{0,\, \frac{1}{3},\, \frac{1}{2}, \, \frac{2}{3}, \, 1\}$.

\end{thm}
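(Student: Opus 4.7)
The plan is to distinguish whether the partial greyscale $g$ is compatible with a proper $2$-colouring of $G$ and, in the conflicting case, to trap the lightest tone $c_1$ of $\mathcal{C}_f$ inside $[\tfrac13,\tfrac12]$. Throughout, $(A,B)$ denotes the bipartition of $G$ and $\phi_0,\phi_1$ its two proper $2$-colourings.

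If $V_c=V_{\phi_0}$ or $V_c=V_{\phi_1}$, the corresponding $\phi_i$ itself extends $g$ and produces the contrast vector $(1,\dots,1)$; this vector is lexicographically the largest possible on any graph, so $Im(f)=\{0,1\}$ and we are done. Henceforth I assume $V_{\phi_0}\neq\emptyset\neq V_{\phi_1}$, so that no proper $2$-colouring of $G$ extends $g$.

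Next I build an auxiliary compatible greyscale $h$ with minimum edge-tone $\tfrac13$: generically, $h$ sends every non-forced vertex of $A$ to $\tfrac13$ and every non-forced vertex of $B$ to $\tfrac23$. A short case analysis over the four edge types (forced--forced, forced--non-forced on each side, and non-forced--non-forced), in which the independence of $V_0$ and $V_1$ removes the only potentially bad configurations, shows that every edge contrast under $h$ is at least $\tfrac13$. If $V_0=\emptyset$ or $V_1=\emptyset$, the recipe must be adapted so that both $0$ and $1$ remain in $Im(h)$; connectedness of $G$ combined with the independence of $V_0,V_1$ rules out the corner configurations in which no such adaptation exists. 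By lex-maximality, $c_1\geq\tfrac13>0$; in particular $f$ cannot have $Im(f)\subseteq\{0,1\}$ (any such greyscale would be a proper $2$-colouring extending $g$), so $f$ takes some value in $(0,1)$ and Proposition~\ref{Ernuevo} gives $c_1\leq\tfrac12$. The case $c_1=\tfrac12$ is settled immediately by the same proposition, which forces $Im(f)=\{0,\tfrac12,1\}$.

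It remains to treat $\tfrac13\leq c_1<\tfrac12$. For every $x\in Im(f)\cap(0,1)$ the vertex $v$ with $f(v)=x$ is non-forced, and Lemma~\ref{lem:previo-restricted} supplies neighbours $u_1,u_2$ of $v$ with $f(u_1)=x-m_v$, $f(u_2)=x+m_v$ and $m_v\geq c_1\geq\tfrac13$; the constraints $0\leq f(u_1)$ and $f(u_2)\leq 1$ give $m_v\leq\min\{x,1-x\}$, which immediately excludes $x\in(0,\tfrac13)$. For $x\in(\tfrac13,\tfrac12)$ I split on $m_v$: if $m_v=x$, then $f(u_2)=2x\in(\tfrac23,1)$, and applying the Lemma at $u_2$ forces $m_{u_2}\leq 1-2x<\tfrac13\leq m_{u_2}$, a contradiction; if $m_v<x$, then $f(u_1)\in(0,x-\tfrac13)\subset(0,\tfrac13)$, which has just been excluded. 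The intervals $(\tfrac12,\tfrac23)$ and $(\tfrac23,1)$ are handled by the symmetry $f\mapsto 1-f$, which interchanges the roles of $V_0$ and $V_1$ without altering the contrast vector. Hence $Im(f)\subseteq\{0,\tfrac13,\tfrac12,\tfrac23,1\}$. The main technical obstacle I foresee is the bookkeeping required in the construction of $h$ when $V_0$ or $V_1$ is empty, since at that point connectedness of $G$ and the independence of $V_0,V_1$ must be combined carefully to guarantee that $\{0,1\}\subseteq Im(h)$ without sacrificing any edge contrast.
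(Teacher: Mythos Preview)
Your proposal is correct and follows essentially the same two-step strategy as the paper: first exhibit a compatible greyscale with every edge tone at least $\tfrac13$ to force $c_1\ge\tfrac13$, then iterate Lemma~\ref{lem:previo-restricted} once to exclude any value outside $\{0,\tfrac13,\tfrac12,\tfrac23,1\}$. The only real difference is cosmetic: for the auxiliary greyscale the paper locally perturbs a proper $2$-colouring $\varphi$ near the ``conflicting'' vertices of $V_c$ (assigning $\tfrac13$ or $\tfrac23$ only to their neighbours), whereas you send all non-forced vertices of one side to $\tfrac13$ and of the other to $\tfrac23$; your version is simpler and your flagged corner case $V_0=\emptyset$ or $V_1=\emptyset$ is indeed easily handled by recolouring a single non-forced vertex to the missing extreme tone.
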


\begin{proof}
Let us consider  the set of 
initially coloured vertices $V_c\subset V$ with  incomplete
greyscale $g$. Consider the 2-colouring   of $G$, $\varphi=\phi_i :V \rightarrow \{0,1\}$, $i=0, 1$, such that $V_{\phi_i} \neq \emptyset$ (it is not difficult to  check that at least one of the two 2-colourings $\phi_1$ or $\phi_2$ of $G$ satisfies this condition).
 We define the following greyscale on $G$:

$$ f_{\varphi}(v)=\left\{ \begin{array}{lcl}
             g(v)& if & v\in V_c\\
             \frac{2}{3} & if & v\notin V_c \,\mbox{and there is}\, u\in V_c\cap N(v) \, \mbox{and}\, \varphi(u) \neq
             g(u)=0\\
             \frac{1}{3} & if & v\notin V_c \, \mbox{and there is}\, u\in V_c \cap N(v) \, \mbox{and}\, \varphi(u) \neq
             g(u)=1 \,  \\
             & \mbox{and} &\mbox{there is no}\, u\in V_c \cap N(v) \, \mbox{such that}\,  \varphi(u) \neq
             g(u)=0\\
             \varphi(v)& & \mbox{otherwise.}
\end{array}
\right.$$

It is readily checked that  $f_{\varphi}$  is well defined  and  compatible with $g$. Besides, since $G$
is bipartite  and equivalently  it has no  odd cycles, 
$Im(\widehat{f_{\varphi}})\subseteq \{\frac{1}{3},\, \frac{2}{3},\, 1\}$ holds.  Hence, the contrast vector $\mathcal{C}_{f_{\varphi}}$ has first component  $\frac{1}{3}$ and, therefore, the maximum  contrast vector compatible with $g$  must have first component  $a\geq \frac{1}{3}$.

Next let $f$ be a  maximum contrast  greyscale compatible with  $g$ and
let us suppose  that there is a vertex $u\in V$ such that $f(u)\notin \{0,\, \frac{1}{3},\,
\frac{1}{2}, \, \frac{2}{3}, \, 1\}$. From Lemma  \ref{lem:previo-restricted}, there exists the pair of neighbours
closest to $u$, $u_1$ and $u_2$, such that $f(u_1) < f(u) < f(u_2)$ and $f(u)-f(u_1)=f(u_2)-f(u)=d$.
 We analyse the value $d$. Observe that
$d< \frac{1}{2}$, since  $d=\frac{1}{2}$ implies $f(u)=\frac{1}{2}$ which is ruled out. On
the other hand, $\widehat{f}(\{u,u_1\})=d\geq a\geq \frac{1}{3}$, thus $\frac{1}{3}\leq d <
\frac{1}{2}.$

Since $d <\frac{1}{2}$, it is clear that $f(u_1)\neq 0$ or $f(u_2)\neq 1$. Let us  suppose
$f(u_1)\neq 0$ and consider the  pair of neighbours closest to $u_1$, namely $u_3$ and $u_4$, given by
Lemma \ref{lem:previo-restricted}. Then, $f(u_3) < f(u_1) < f(u_4)$ and $\frac{1}{3}\leq
f(u_1)-f(u_3)=f(u_4)-f(u_1)\leq d$.
Therefore $f(u_2)-f(u_3)\geq \frac{1}{3} +2d\geq 1$, which implies $d= \frac{1}{3}$ and $f(u)=
\frac{2}{3}$ contradicting our assumption. In a similar manner, if $f(u_2)\neq 1$ we reach
$f(u)=\frac{1}{3}$ which is also impossible. 

Then, we conclude $Im(f)\subseteq \{0,\, \frac{1}{3},\,
\frac{1}{2}, \, \frac{2}{3}, \, 1\}$  and the proof is finished.

\end{proof}

The bound set given by  Theorem \ref{thm:restrictedvalues} is tight  in the sense that there
are examples of graphs for which the full  set $\{0,\, \frac{1}{3},\, \frac{1}{2}, \, \frac{2}{3},
\, 1\}$ is needed. Figure \ref{optimalset} shows two of such examples.

\begin{figure}

\begin{center}\includegraphics[width=9cm]{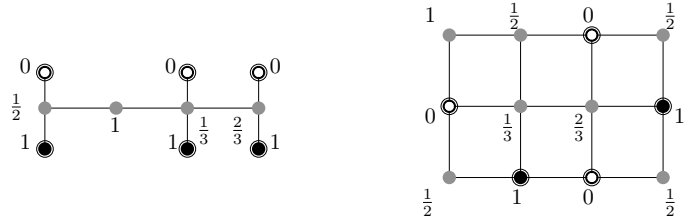}\end{center}

\caption{ Maximum contrast greyscales for the $\{0,1\}$-\textsc{rmacg}
problem on this tree and this grid are given. Double circles in
vertices denote the set $V_c$ of initially coloured vertices.} \label{optimalset}

 \end{figure}

 However, next theorem gives a characterization of $Im(f)$ on the family  of complete bipartite graphs, where $f$ never takes the  values $\frac{1}{3}$ and $\frac{2}{3}$ no  matter the cardinality  of $V_c$ is.

\begin{thm} \label{thm:Knm}
	Let $f$ be a maximum contrast greyscale for the $\{0, 1\}$-\textsc{rmacg} problem on any complete  bipartite graph. Then, its  maximum  contrast vector  has either all components equal to 1 or else all components equal to $\frac 12$. 
\end{thm}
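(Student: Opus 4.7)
The plan is to exploit the completeness of the bipartite structure to force a strong dichotomy on $V_c$. Let $(A,B)$ be the bipartition of $K_{n,m}$. Since every vertex of $A$ is adjacent to every vertex of $B$, and no two vertices of $V_0$ (respectively $V_1$) may be adjacent by hypothesis, the first step is to observe that each of $V_0, V_1$ lies entirely in one of the two parts. Up to swapping the parts, only two scenarios remain: (i) $V_0\subseteq A$ and $V_1\subseteq B$ (with one possibly empty), and (ii) both $V_0$ and $V_1$ are nonempty and contained in the same part, say $A$.

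In scenario (i), the $2$-colouring $\phi\colon A\mapsto 0,\ B\mapsto 1$ extends $g$, so the associated greyscale is compatible and has every edge contrast equal to $1$. This is evidently a maximum contrast greyscale, yielding $\mathcal{C}_f=(1,\dots,1)$, and the statement holds.

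In scenario (ii), every $v\in B$ is adjacent both to a vertex of $V_0$ and to a vertex of $V_1$, so $\min\{f(v),\,1-f(v)\}\le \tfrac12$; this gives an upper bound of $\tfrac12$ on the first component of any contrast vector compatible with $g$. The bound is attained by the greyscale that assigns every vertex of $B$ the tone $\tfrac12$ and every uncoloured vertex of $A$ an extreme tone: this is clearly compatible with $g$, its image contains $0$, $\tfrac12$ and $1$, and every edge contrast equals $\tfrac12$. Hence the first component of the maximum contrast vector is exactly $\tfrac12$.

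It remains to promote this single-component information to the statement that \emph{every} component equals $\tfrac12$. For this I would invoke Proposition~\ref{Ernuevo}: the existence of $B$-vertices with colours strictly between $0$ and $1$, together with first component equal to $\tfrac12$, forces $Im(f)=\{0,\tfrac12,1\}$. Adjacency of any $v\in B$ to both $V_0$ and $V_1$ then pins $f(v)=\tfrac12$, so every edge of $K_{n,m}$ has contrast $|f(u)-\tfrac12|$ with $f(u)\in\{0,\tfrac12,1\}$; the contrast $0$ is excluded because the first component is $\tfrac12>0$, so $f(u)\in\{0,1\}$ and every contrast equals $\tfrac12$. The main obstacle is precisely this last promotion: the upper bound and the explicit construction only control the first component of the contrast vector, and it is the rigidity of Proposition~\ref{Ernuevo} that is essential to pin $f$ down on the whole $B$ side and thus propagate the value $\tfrac12$ to every coordinate.
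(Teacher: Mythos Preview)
Your proof is correct and follows the same case split as the paper's argument: either $V_c$ is compatible with one of the two $2$-colourings (your scenario (i), the paper's $V_c=V_{\phi_i}$), giving all edge contrasts equal to $1$, or both $V_0$ and $V_1$ lie in the same part (your scenario (ii), the paper's $V_{\phi_0},V_{\phi_1}\neq\emptyset$), forcing the other part to take tone $\tfrac12$. The paper dispatches scenario (ii) with a single ``straightforwardly checked''; your promotion step via Proposition~\ref{Ernuevo} is a valid way to fill that in, though you could also argue directly that first component $\ge\tfrac12$ together with adjacency of each $v\in B$ to both $V_0$ and $V_1$ already pins $f(v)=\tfrac12$ without appealing to the proposition.
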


\begin{proof} 
	 Let   $f$ be a maximum contrast greyscale compatible with
	 an incomplete $V_c$-greyscale $g$ on the complete  bipartite graph $K_{r,s}$.  Since  $V_c$ has no  adjacent vertices with the same initial extreme tone, only two possibilities  occur. The first one is that all vertices of $V_c$ have assigned 
	the colour of either $\phi_0$ or  $\phi_1$,  that is, $V_c=V_{\phi_0}$  or $V_c= V_{\phi_1}$, respectively. Without loss of generality, we consider $V_c=V_{\phi_0}$.  
  In this case, precisely  $f=\phi_0$ 
  gives an affirmative answer to   the $\{0, 1\}$-\textsc{rmacg} problem on $K_{r,s}$. Therefore, its maximum contrast vector   has all the components equal to 1. The other possibility is produced when $V_{\phi_0}\neq \emptyset$ and $ V_{\phi_1}\neq \emptyset$.  
	It is deduced that  $V_c$ is
	a subset of one of the chromatic classes  of the complete
	bipartite graph $K_{r,s}$.  Thus it is straightforwardly checked that all vertices of the other chromatic class of the graph must have assigned the
	grey tone  $\frac{1}{2}$   in order to give a  maximum contrast greyscale  for the $\{0,1\}$-\textsc{rmacg}
	problem on $K_{r,s}$. Hence, its maximum contrast vector has all the components equal to  $\frac{1}{2}$.
	
\end{proof}

This result  shows that  the $\{0, 1\}$-\textsc{rmacg} problem is solved  on the class of complete bipartite graphs. The general  case of our problem on bipartite graphs is far from having a trivial answer. Next Theorems \ref{thm:three} and  \ref{contrast-arbol} illustrate results for particular situations. In the first one  only  one  vertex lies in $V_{\phi_0}$ or $V_{\phi_1},$  while in the second one the bipartite graph is a  tree.
Below we include the technical  Lemmas \ref{lem:restrictedvalues2} and \ref{arbol} needed for proving the announced theorems. We denote $P_{uv}$ a $u-v$ path in $G$, that is a path joining the vertices $u$ and  $v$ in $G$ as it is usually   collected in the literature (see \cite{h-gt-90}).   According to  the notation introduced above,  the following result is given.

\begin{lemma}\label{lem:restrictedvalues2} Let   $f$ be a maximum contrast greyscale for the $\{0, 1\}$-\textsc{rmacg} problem   on a 2-chromatic  connected graph $G$. 
Then, for any
pair of vertices $u\in V_{\phi_0}$, $v \in V_{\phi_1}$ and for any $u-v$ path in $G$ 
there is a vertex $w$ in this path such that  $f(w)\notin\{0,1\}$. 
\end{lemma}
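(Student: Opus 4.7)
The plan is to argue by contradiction. Suppose that for some $u\in V_{\phi_0}$, $v\in V_{\phi_1}$ there exists a $u$-$v$ path $P=w_0 w_1\cdots w_k$ (with $w_0=u$ and $w_k=v$) on which every vertex carries an extreme tone, i.e.\ $f(w_i)\in\{0,1\}$ for all $i=0,\ldots,k$.

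The first step is to show that $f$ must strictly alternate between $0$ and $1$ along $P$. For this I would invoke the first paragraph of the proof of Theorem~\ref{thm:restrictedvalues}: the auxiliary greyscale $f_{\varphi}$ constructed there is always compatible with $g$ and has a contrast vector whose first component is $\frac{1}{3}$, so any maximum contrast greyscale for the $\{0,1\}$-\textsc{rmacg} problem has lightest tone at least $\frac{1}{3}>0$. Hence no edge $\{w_i,w_{i+1}\}$ of $P$ can satisfy $f(w_i)=f(w_{i+1})$, and together with $f(w_i)\in\{0,1\}$ this forces the sequence $f(w_0),f(w_1),\ldots,f(w_k)$ to alternate between $0$ and $1$.

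The second step is a parity identification with $\phi_0$. Because $G$ is $2$-chromatic, its two proper $2$-colourings $\phi_0,\phi_1$ are complementary, so $\phi_1(x)=1-\phi_0(x)$ for every $x\in V$; in particular $\phi_0$ also alternates between $0$ and $1$ along every path in $G$. Since $u\in V_{\phi_0}$ yields $f(w_0)=g(u)=\phi_0(u)=\phi_0(w_0)$, a straightforward induction on $i$ (both sequences are $\{0,1\}$-valued, alternate, and agree at $i=0$) gives $f(w_i)=\phi_0(w_i)$ for all $i=0,\ldots,k$.

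Applying this at $i=k$ produces $f(v)=\phi_0(v)$, whereas $v\in V_{\phi_1}$ gives $f(v)=g(v)=\phi_1(v)=1-\phi_0(v)$, a contradiction. The only non-trivial ingredient is the positivity of the lightest tone in the restricted setting, which is supplied by the preliminary construction in the proof of Theorem~\ref{thm:restrictedvalues}; the rest is simply an alternation argument along the chosen path.
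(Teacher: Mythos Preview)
Your proof is correct and follows essentially the same parity/alternation idea as the paper's: both exploit that a path whose vertices all lie in $\{0,1\}$ and whose endpoints satisfy $f(u)=\phi_0(u)$, $f(v)=\phi_1(v)$ cannot alternate consistently, forcing either a zero-contrast edge (the paper's phrasing) or, equivalently, the identity $f=\phi_0$ along the path and hence the contradiction $\phi_0(v)=\phi_1(v)$ (your phrasing). You make the positivity of the lightest tone explicit by invoking the auxiliary greyscale $f_\varphi$ from the proof of Theorem~\ref{thm:restrictedvalues}, whereas the paper's proof leaves this implicit when it declares that first component~$0$ is a contradiction.
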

\begin{proof} 

Let   $P_{uv}$ be a
$u-v$ path in $G$ with length $l$. Since $G$ is a bipartite
graph, for any  2-colouring $\phi_i$ of $G$ it is verified that either $l$ is an
even number if and only if $\phi_i(u)=\phi_i(v)$ or else $l$ is an odd
number if and only if $\phi_i(u)\neq \phi_i(v)$, for $i=0,1$. Therefore, for the greyscale
$f$ there are  only two possibilities: either $f(u)=f(v)$ and $l$ is
odd or else $f(u)\neq f(v)$ and $l$ is even. It is
straightforwardly checked that there is a vertex $w\in P_{uv}$ such that $f(w)\notin\{0,1\}$, since otherwise, the first component of the maximum contrast vector $\mathcal{C}_f$ is 0. 
\end{proof}

 Next result gives a bound set of $Im(f)$ for  the case of 2-chromatic connected graphs with $V_{\phi_0}$ or $V_{\phi_1}$
  having precisely  one vertex. 

\begin{thm} \label{thm:three}
 Let   $f$ be a maximum contrast greyscale for the $\{0, 1\}$-\textsc{rmacg} problem   on a 2-chromatic connected graph $G$, with $|V_{\phi_0}|=1$ or $|V_{\phi_1}|=1$. Then,   it is
verified $Im(f)\subseteq \{0,\,  \frac{1}{2}, \, 1\}$.
\end{thm}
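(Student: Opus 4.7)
The plan is to reduce the statement to an application of Proposition \ref{Ernuevo}: it suffices to show that the first component of $\mathcal{C}_f$ is at least $\frac{1}{2}$. Indeed, by that proposition, if some vertex $v$ satisfies $0<f(v)<1$ then the first component is at most $\frac{1}{2}$, and if it equals $\frac{1}{2}$ then $Im(f)=\{0,\frac{1}{2},1\}$; otherwise $f$ never takes a value strictly between $0$ and $1$, so $Im(f)\subseteq\{0,1\}$. In all cases $Im(f)\subseteq\{0,\frac{1}{2},1\}$. So the crux is to exhibit a greyscale $f'$ compatible with $g$ whose first contrast coordinate is at least $\frac{1}{2}$; then the maximum contrast greyscale $f$ inherits this bound.

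To build $f'$, assume WLOG $|V_{\phi_0}|=1$ (the other case is symmetric), write $V_{\phi_0}=\{u_0\}$, and let $A,B$ be the bipartition of $G$ with $u_0\in A$. If $V_{\phi_1}=\emptyset$ then $f'=\phi_0$ already solves the problem with $\mathcal{C}_{f'}$ having all components equal to $1$. Otherwise, WLOG $g(u_0)=\phi_0(u_0)=0$, and every vertex of $V_{\phi_1}$ has initial colour opposite to $\phi_0$ on its class.

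The central observation is that no neighbour of $u_0$ can lie in $V_c$. Any neighbour $v$ of $u_0$ lies in $B$; if $v\in V_c$ then $v\in V_{\phi_1}$ (because $|V_{\phi_0}|=1$), so $g(v)=\phi_1(v)=0=g(u_0)$, making $u_0$ and $v$ two adjacent vertices both in $V_0$, contrary to the standing hypothesis that no two adjacent vertices of $V_c$ share the same initial colour. Hence the neighbours of $u_0$ are free to be recoloured.

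Define $f'$ by setting $f'(u_0)=0$, $f'(v)=\tfrac{1}{2}$ for every neighbour $v$ of $u_0$, and $f'(v)=\phi_1(v)$ for all remaining vertices. Compatibility with $g$ follows from the previous observation together with $V_c\setminus\{u_0\}\subseteq V_{\phi_1}$. Every edge incident to $u_0$ has tone $\tfrac{1}{2}$; every edge joining a neighbour of $u_0$ (which lies in $B$) to a non-neighbour in $A$ has tone $|\tfrac{1}{2}-\phi_1(\cdot)|=\tfrac{1}{2}$ because $\phi_1\equiv 1$ on $A$; and every remaining edge joins two vertices in different parts on which $f'=\phi_1$, giving tone $1$. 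Thus every edge has tone at least $\tfrac{1}{2}$, so $\mathcal{C}_{f'}$ has first coordinate $\tfrac{1}{2}$, which is what we needed; combined with the initial reduction via Proposition \ref{Ernuevo} this finishes the proof. The main delicacy is the adjacency argument showing that neighbours of $u_0$ are unconstrained, which is exactly where the hypothesis $|V_{\phi_0}|=1$ (or $|V_{\phi_1}|=1$) enters.
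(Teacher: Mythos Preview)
Your proof is correct and follows essentially the same route as the paper's: both reduce the claim, via Proposition~\ref{Ernuevo}, to exhibiting a compatible greyscale whose minimum edge tone is $\tfrac12$, and both build that greyscale by assigning $\tfrac12$ to the neighbours of the single vertex $u_0\in V_{\phi_0}$ and $\phi_1$ elsewhere. Your version is, if anything, slightly more explicit than the paper's in checking compatibility (the observation that no neighbour of $u_0$ can lie in $V_c$, which is precisely where the hypothesis $|V_{\phi_0}|=1$ is used) and in verifying the edge tones case by case.
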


\begin{proof}
Let   $f$ be a maximum contrast greyscale compatible with
an incomplete $V_c$-greyscale $g$ of $G$.  
Without loss of generality, we deal
with the case  $V_{\phi_0}\neq \emptyset$ and $ V_{\phi_1}\neq \emptyset$, since otherwise $Im(f)=\{0,
1\}$. Besides, we can suppose $V_{\phi_0}=\{v_0\}$
and the case  
$V_{\phi_1}=\{v_0\}$ is analogous.

From Lemma \ref{lem:restrictedvalues2}, $f$ takes at least a grey tone  different from $0$ and $1$, and from Proposition \ref{Ernuevo}, the first component of the  vector $\mathcal{C}_f$ is at most $\frac 12$.

Let us  define the following  greyscale:

$$f_0(v)=\left\{ \begin{array}{lcl}
g(v)& if & v\in V_c\\
\frac{1}{2} & if & v\in N(v_0)\\
\phi_1(v)& & \mbox{otherwise.}
\end{array}
\right.$$

It is   checked that $f_0$  is well defined. In particular, no pair of adjacent vertices are
coloured with the same grey tone since there are no triangles in the  subgraph induced by $N(v_0)$  and    $\phi_1$ is a proper colouring on  $V-N(v_0)$. Hence, the first component of the vector $\mathcal{C}_{f_0}$ is $\frac 12$. Therefore, since $\mathcal{C}_f\ge \mathcal{C}_{f_0}$, then $\mathcal{C}_f$ has its first component  equal to $\frac 12$. Finally, from Proposition \ref{Ernuevo} we reach $Im(f)= \{0,\, \frac{1}{2}, \, 1\}$.

\end{proof}

\begin{prop}\label{arbol}

Let $T$ be a subdivision of the star graph $K_{1,n}$ with  leaves $\{v_1,\,
\dots, \, v_n\}$, for $n\geq 3,$   $u$  the vertex of
degree $n$ in $T$ and $g$ an incomplete greyscale such that $g(v_i)\in
\{0, \, 1\}$, for $1\leq i\leq n$. 
For any  maximum contrast greyscale  
compatible with $g$, $f$  of  $T$, it is verified that 
$Im(f)\subseteq  \{0, \frac{1}{2}, 1\}$. Moreover,  $f$ uses precisely the grey tone $
\frac 12$  over   at most   $\lfloor \frac{n}{2}\rfloor$  vertices, each one  of them lying in a different  path
$P_{uv_i}$.

\end{prop}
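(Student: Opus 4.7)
\medskip
\noindent\textbf{Proof plan.} The plan is to establish $Im(f)\subseteq\{0,\frac12,1\}$ by producing an explicit compatible greyscale whose first contrast component equals $\frac12$, and then to analyse the possible placements of the $\frac12$-vertices in a lexicographically maximum greyscale so as to count them.

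For the first part, let $\ell_i$ denote the length of $P_{uv_i}$ and write this branch as $u=w_{i,0},w_{i,1},\dots,w_{i,\ell_i}=v_i$. Define $f_0$ by $f_0(u)=\frac12$ and, walking inwards from $v_i$, $f_0(w_{i,\ell_i-j})=g(v_i)$ when $j$ is even and $f_0(w_{i,\ell_i-j})=1-g(v_i)$ when $j$ is odd, for $0\le j\le \ell_i-1$. Then $f_0$ is compatible with $g$, each of the $n$ edges incident to $u$ has contrast $\frac12$ and every other edge has contrast $1$, so $\mathcal{C}_{f_0}$ starts with $\frac12$. Maximality of $\mathcal{C}_f$ forces its first component to be at least $\frac12$, and Proposition~\ref{Ernuevo} then yields $Im(f)\subseteq\{0,\frac12,1\}$.

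For the counting, I begin with the observation that, since the first component of $\mathcal{C}_f$ equals $\frac12>0$, no two adjacent vertices share a grey tone, every edge has contrast in $\{\frac12,1\}$, and every neighbour of a $\frac12$-vertex lies in $\{0,1\}$. In particular, lexicographic maximality of $\mathcal{C}_f$ is equivalent to minimising the number of $\frac12$-edges. Using the bipartition of $T$, classify a branch $P_{uv_i}$ as \emph{inconsistent with $f(u)=0$} when $g(v_i)\neq \ell_i\bmod 2$; let $a$ be the number of such branches, so that $n-a$ are inconsistent with $f(u)=1$. A parity walk along a branch shows that with only values in $\{0,1\}$ the extension is forced to alternate, hence an inconsistent branch requires at least one interior $\frac12$-vertex (contributing two $\frac12$-edges on that branch), whereas a consistent branch admits a compatible extension with none.

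Comparing the three feasible values of $f(u)$ yields minima of $2a$, $2(n-a)$ and $n$ $\frac12$-edges respectively; assuming WLOG $a\le n-a$, the overall minimum is $2a\le n$. A maximum contrast greyscale must attain this minimum, which forces one of two patterns: either $f(u)\in\{0,1\}$ with exactly one interior $\frac12$-vertex on each of the at most $a\le\lfloor n/2\rfloor$ inconsistent branches---placing two $\frac12$-vertices on the same branch would contribute at least two extra $\frac12$-edges, ruling that out---or, only in the tie case $a=n/2$ with $n$ even, $f(u)=\frac12$ with $u$ as the single $\frac12$-vertex. In every case the number of $\frac12$-vertices is at most $\lfloor n/2\rfloor$ and distinct $\frac12$-vertices lie on distinct branches $P_{uv_i}$.

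The main obstacle I foresee is the clean per-branch lower-bound argument: formalising that every inconsistent branch really forces at least one interior $\frac12$-vertex, and that any additional $\frac12$-vertex on a single branch strictly increases the $\frac12$-edge count. Both reduce to a parity walk along the subpath, combined with the fact that $\frac12$-vertices form an independent set, which guarantees that each $\frac12$-vertex contributes exactly $\deg(\cdot)$ many $\frac12$-edges disjoint from those of any other $\frac12$-vertex.
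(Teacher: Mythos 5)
Your proof follows essentially the same route as the paper's: first exhibit the alternating greyscale with $f(u)=\frac12$ to force the first contrast component up to $\frac12$ and invoke Proposition~\ref{Ernuevo}, then compare the two proper extensions of the branches (your parameter $a$ is the paper's $n-n_c$) and count $\frac12$-edges via the independence of the $\frac12$-vertices. Your version is in fact more explicit than the paper's about why minimising $\frac12$-edges is equivalent to lexicographic maximality and why two $\frac12$-vertices cannot share a branch.

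One corner case needs patching: a subdivision of $K_{1,n}$ may leave some edges unsubdivided, so a branch $P_{uv_i}$ can have length $1$ and hence no interior vertex. For such a branch, ``inconsistent with $f(u)=c$'' does not cost two $\frac12$-edges --- it makes the choice $f(u)=c$ outright infeasible (the edge $\{u,v_i\}$ would have contrast $0$), so your claimed minimum $2a$ can be unattainable and the true optimum may be the $f(u)=\frac12$ pattern even when $2a<n$. The final bound survives because the always-feasible option $f(u)=\frac12$ caps the optimal $\frac12$-edge count at $n$, so whenever the optimum has $f(u)\in\{0,1\}$ its cost $2a_c\le n$ still gives at most $\lfloor n/2\rfloor$ interior $\frac12$-vertices; you should simply replace ``the overall minimum is $2a$'' by a minimum over the \emph{feasible} options among $\{2a,\,2(n-a),\,n\}$. (The paper's own proof is equally silent on this case.)
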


\begin{proof}
  Firstly,  we observe that in  case that $T$ is equal  to $K_{1,n}$ and  $g(v_i)=0$  for $1\leq i\leq n$   or $g(v_i)=1$  for $1\leq i\leq n,$ the grey tone $\frac{1}{2}$ is not used in $f$, and hence   the result is held.
  Next, let us observe that  since the paths $P_{uv_i}$ intersect  only in the vertex $u$, we can define a greyscale $f'$ that assigns $f'(u)=\frac{1}{2}$, 
   $f'(v_i)=g(v_i)$  and extends this colouring for each $P_{uv_i}$ by starting at $v_i$ and alternates 0 and 1 until the vertices of $N(u)$  are reached. $\mathcal{C}_{f'}$ has precisely its $n$ first components 
    equal to  $\frac{1}{2}$ and the rest of components are equal to 1. Hence the first component of the maximum contrast vector 
    for the $\{0, 1\}$-\textsc{rmacg} problem   is greater than or equal to  $\frac{1}{2}$ and therefore, by applying Proposition \ref{Ernuevo}, any maximum contrast greyscale compatible with $g$, $f$, verifies  $Im(f)\subseteq \{ 0, \frac{1}{2}, 1\}$.
    
  Moreover, in case that  $f(u)=\frac{1}{2}$, then $\mathcal{C}_{f'} =\mathcal{C}_{f}$, hence $u$ is the only vertex with grey tone equal to $\frac{1}{2}$. 
   We reach to the assertion of the Lemma since $\lfloor \frac{n}{2}\rfloor\geq 1$.

Now, let us consider the case in which $f(u)\neq \frac{1}{2}$. Without loss of generality we set 
the colouring $\phi_0$  to be the one that assigns the colour 0
to the vertex $u$ and denote by $n_0$ the number of paths $P_{uv_i}$ verifying $g(v_i)=\phi_0(v_i)$.
 Next, the colouring $\phi_1$  assigns
the colour 1  to the vertex $u$ and let us  denote  by $n_1$ the number of
paths $P_{uv_i}$ verifying 
$g(v_i)=\phi_1(v_i)$.

Next,   we select the colour  $c$ for $u$ such that $n_c=max\{n_0,
n_1\}$. 
 By the classical pigeonhole principle, it is deduced that $n_c\geq \lceil \frac{n}{2}\rceil$. 

Finally, let us define $f''$ a greyscale compatible with $g$ in the following way:

 $$f''(v)=\left\{\begin{array}{cl}
 g(v) &  {\mbox if} \, v=v_i\,  \mbox{ for}\, i=1, \dots, n   \\
 \frac{1}{2} & \mbox{if}\, v\in N(v_i)\, \mbox{ for each vertex }\, v_i\, \mbox{ such that}\, \phi_c(v_i)\neq g(v_i)  \\
\phi_c(v) &  \mbox{in other case.} \\
 \end{array}
 \right.$$
  
This way,  only a vertex of $P_{uv_i}$ has  grey tone $\frac{1}{2},$ for  at most $n-\lceil
\frac{n}{2}\rceil=\lfloor \frac{n}{2}\rfloor$  paths $P_{uv_i}$. 
We conclude that since the  maximum  contrast greyscale compatible with $g$, $f$ of $G,$ verifies $\mathcal{C}_f\geq \mathcal{C}_{f''}$, the assertion holds.

\end{proof}

\vspace*{.5cm}

 The next result shows another case in which the grey tone $\frac{1}{2}$ plays an important role in the maximum  contrast greyscale for the $\{0, 1\}$-\textsc{rmacg} problem. It is tackled in the family  of trees  with precisely three initially coloured vertices and a method
to assign the value $\frac{1}{2}$ is described into the proof.

\begin{thm}\label{contrast-arbol}
Any maximum  contrast greyscale  $f$ for  the $\{0, 1\}$-\textsc{rmacg} problem on a tree $T=G(V, E)$ with an incomplete $V_c$-greyscale $g$ and $|V_c|=3$ verifies that
$|\{v\in V : f(v)=\frac{1}{2}\}|\leq 2.$
\end{thm}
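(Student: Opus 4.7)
The strategy is to split on the partition $V_c=V_{\phi_0}\cup V_{\phi_1}$. If $V_{\phi_0}=V_c$ or $V_{\phi_1}=V_c$, then $f$ equals the corresponding $2$-colouring $\phi_i$, so $Im(f)=\{0,1\}$ and $|\{v:f(v)=\tfrac{1}{2}\}|=0$. Hence, up to swapping $\phi_0$ and $\phi_1$, we may assume $V_{\phi_0}=\{u\}$ and $V_{\phi_1}=\{v_1,v_2\}$. Theorem \ref{thm:three} then gives $Im(f)\subseteq\{0,\tfrac{1}{2},1\}$; write $W=f^{-1}(\tfrac{1}{2})$ and assume $W\ne\emptyset$ (otherwise $|W|=0\le 2$ trivially).

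The construction in the proof of Theorem \ref{thm:three} exhibits a compatible greyscale whose first contrast coordinate equals $\tfrac{1}{2}$, so the first coordinate of $\mathcal{C}_f$ is at least $\tfrac{1}{2}$; Proposition \ref{Ernuevo} gives the reverse inequality, so it equals exactly $\tfrac{1}{2}$ and $f$ has no edge of value $0$. Three properties of $W$ follow at once. First, $W$ is an independent set in $T$, since adjacent $\tfrac{1}{2}$-vertices would produce an edge of value $0$. Second, each $w\in W$ satisfies $\deg_T(w)\ge 2$ by Lemma \ref{lem:previo-restricted}, which supplies a $0$-neighbour and a $1$-neighbour. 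Third, $W$ separates $u$ from $\{v_1,v_2\}$ in $T$: if some component $C$ of $T\setminus W$ contained both $u$ and some $v_i$, then $f|_C$ would be a proper $\{0,1\}$-colouring of $C$ and thus agree with $\phi_0|_C$ or $\phi_1|_C$, but neither matches $g(u)=\phi_0(u)$ and $g(v_i)=\phi_1(v_i)$ simultaneously. Since $W$ is independent, the number of $\tfrac{1}{2}$-coloured edges of $T$ equals $\sum_{w\in W}\deg_T(w)$ and the remaining edges have value $1$, so maximising $\mathcal{C}_f$ in lexicographic order is equivalent to minimising this sum.

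The key step is to extract from $W$ a sub-cut $W^*\subseteq W$ with $|W^*|\le 2$ by analysing the minimal subtree $S$ of $T$ spanning $V_c$. If $S$ is a path with $u$ strictly between $v_1$ and $v_2$, then $P_{uv_1}\cap P_{uv_2}=\{u\}$, $W$ meets both $P_{uv_1}\setminus\{u,v_1\}$ and $P_{uv_2}\setminus\{u,v_2\}$, and one vertex from each intersection forms $W^*$ of size $2$. If $S$ is a path with $u$ at an endpoint, say $P_{uv_2}\supseteq P_{uv_1}$, then any vertex of $W\cap(P_{uv_1}\setminus\{u,v_1\})$ already separates $u$ from both $v_1$ and $v_2$ and serves as $W^*$ of size $1$. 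If $S$ has a Steiner point $s\notin V_c$ of degree $3$ in $S$, let $W^*$ be any single vertex of $W\cap(P_{us}\setminus\{u\})$ (a set that includes $s$) when this intersection is nonempty; otherwise $W$ must meet both $P_{sv_1}\setminus\{s,v_1\}$ and $P_{sv_2}\setminus\{s,v_2\}$, and one vertex from each gives $W^*$ of size $2$. In every case, each component of $T\setminus W^*$ intersects $V_c$ either in a single vertex or in $\{v_1,v_2\}\subseteq V_{\phi_1}$, whose members share a chromatic class in any containing subtree, so each component admits a $2$-colouring compatible with $g$.

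Finally, suppose for contradiction that $|W|\ge 3$; then $W^*\subsetneq W$. Define $f^*$ to be $\tfrac{1}{2}$ on $W^*$ and, on each component of $T\setminus W^*$, a $2$-colouring compatible with $g$ (which exists by the previous paragraph). Then $f^*$ is compatible with $g$, has $Im(f^*)\subseteq\{0,\tfrac{1}{2},1\}$, and its number of $\tfrac{1}{2}$-coloured edges equals $\sum_{w\in W^*}\deg_T(w)$, strictly smaller than $\sum_{w\in W}\deg_T(w)$ since each vertex of $W\setminus W^*$ has degree at least $2$. Hence $\mathcal{C}_{f^*}>\mathcal{C}_f$ lexicographically, contradicting the maximality of $f$; therefore $|W|\le 2$. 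The main technical obstacle is the Steiner case analysis in the third paragraph: one must check each of the three geometric configurations to confirm that the chosen $W^*$ truly separates $u$ from both $v_1$ and $v_2$ and that every resulting component admits a $g$-compatible $2$-colouring. Once that is in hand, the degree-counting inequality closes the argument.
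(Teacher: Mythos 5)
Your proof is correct and follows essentially the same strategy as the paper's: reduce via Theorem \ref{thm:three} to the case $Im(f)\subseteq\{0,\frac12,1\}$, observe that the number of $\frac12$-entries of the contrast vector equals the sum of degrees of the $\frac12$-coloured vertices, and show that two such vertices placed on the paths separating the ``odd'' initially coloured vertex from the other two already suffice, so a maximum contrast greyscale cannot afford more. Your explicit extraction of the sub-separator $W^*\subseteq W$ via the Steiner-tree case analysis, followed by the exchange argument, supplies a rigorous justification for the step the paper dispatches with ``it is straightforwardly checked,'' but the underlying route is the same.
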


\begin{proof}  
Let   $V_c=\{v_1,\, v_2, \, v_3\}\subset V$ and let us consider  the $2$-colourings $\phi_0$ and $\phi_1$ on $T$.  Since  $T$ is connected,  the set  $V$ is partitioned  into two chromatic classes in a unique way.  Let us select the $2$-colouring $\phi_i$ such that  the cardinality  of the set $V_{\phi_i}$ is greater than or equal to  2. Let us suppose $|V_{\phi_0}|\geq 2$ and the other case is analogous.

 If $|V_{\phi_0}|=3,$  the greyscale $f$ defined by $f(v)=\phi_0(v)$, for any $v\in V$, is a  maximum  contrast greyscale   for the $\{0, 1\}$-\textsc{rmacg} problem and there is no vertex with grey tone $\frac 12$, hence the result holds.
 
  Otherwise, $|V_{\phi_0}|=2$, hence $|V_{\phi_1}|=1$ and then,  from  Theorem \ref{thm:three}, we get   $Im(f)=\{0,\, \frac{1}{2}, \, 1\}$. Moreover, the number of components equal to  $\frac 12$ in the maximum contrast vector $\mathcal{C}_f$  coincides with the number of vertices adjacent to  those vertices coloured with the grey tone $\frac 12$, that  is  the sum of the degrees  of  all  vertices coloured with $\frac 12$. 
 Without loss of generality, let us suppose  $\phi_0(v_1)=g(v_1),$  $\phi_0(v_2)=g(v_2)$ and $\phi_0(v_3)\neq g(v_3).$ By Lemma \ref{lem:restrictedvalues2}, there is a vertex $w_1\in P_{v_1v_3}$ and a vertex $w_2\in P_{v_2v_3}$ with $f(w_1)=f(w_2)=\frac12$. 
Since  $f$ is a maximum contrast greyscale  compatible with $g$ on $T,$  the restriction of $f$ to the
union of the three paths $P_{v_iv_j},$ for $1\leq i<j\leq 3,$  assigns the grey tone $\frac 12$ only to $w_1$ and $w_2$.  
Due to the fact that $T$ has no cycles, it is straightforwardly checked that, by starting  from each $v_i \in V_c,$  $f$ assigns 0 and 1   appropriately to the remaining vertices of  $T-\{V_c\cup \{w_1,w_2\}\}$. Hence,  $\{w_1,w_2\}$ are the only vertices with grey tone $\frac 12$. 

More precisely, let $W$ be the set of vertices within $V(P_{v_1v_3}\cup P_{v_2v_3}\cup P_{v_1v_2})-V_c$ with minimum degree in  $T$. If there exists a vertex $w\in W$ such that $w\in V(P_{v_1v_3})\cap V(P_{v_2v_3}),$ then $w_1=w_2=w$ and $f$ assigns the grey tone $\frac 12$ to precisely one of such a vertex $w$.
Otherwise, without loss of generality, let us suppose $w\in V(P_{v_1v_3})-V(P_{v_2v_3})$ and consider a pair of vertices  $w'\in V(P_{v_1v_3})\cap V(P_{v_2v_3})$  and $w''\in V(P_{v_2v_3})- V(P_{v_1v_3})$ both with minimum degree in $T$. If $deg(w')\leq deg(w)+deg(w''),$ then   $w_1=w_2=w'$ and $f$ assigns the grey tone $\frac 12$ to precisely  one of such a vertex $w'$. If not,  $f$ assigns $\frac 12$ to precisely $w_1=w$ and $w_2=w''$ or to another pair of vertices $w$ and $w''$ verifying the same conditions as $w_1$ and $w_2,$ respectively.

\end{proof}

\vspace*{0.5cm}

We guess that finding the solution of the $\{0,1\}$-\textsc{rmacg} problem on
bipartite graphs  is NP-complete, in general. 
 In fact, we have given several examples showing that there are trees with  few vertices that  need precisely  
 the full set of 5 grey tones given by  Theorem \ref{thm:restrictedvalues} (see Figure \ref{optimalset}). Many other examples of trees  with similar properties  may be found.
Hence,  finding the
solution of the $\{0,1\}$-\textsc{rmacg} problem is not an easy question to
answer even for the case of trees.

\section{Results and open questions}
\label{sec:conclusions}

We have introduced the new concept of contrast of a graph related to vertex and edge colourings. The maximum contrast problem (\textsc{macg} problem) has been studied in the   general case and links with the chromatic number of a graph is presented in Theorems \ref{thm:coulor} and \ref{th:chi-1}. 
As a conclusion, we get that the proposed problem belongs to NP-complete problems category (Theorem \ref{thm:NP}). 

However, we have achieved several results that allow us  to compute the set of all possible values of grey tones for maximum contrast greyscales of graphs with known chromatic number. Moreover, some notions of subsets in $[0, 1]$ are introduced, such as  the $h$-minimum-step-enchained set. We want to remark here that this kind of sets may be considered independently from any graph and could be useful in other branches of Mathematics.  By using the algorithmic  procedure  \textsc{maximal enchained set} included in this paper we can prove that the set of possible values of grey tones of a maximum contrast greyscale of a graph is a rational finite set (Theorems \ref{th:finite} and \ref{values}).

Hence, some natural questions remain open:  to design  heuristics to  approximate the solution of the \textsc{macg} problem on graphs and for tackling this problem  in 
particular families of graphs in a more efficient way.

On the other hand, we give another  version  of the \textsc{macg} problem consisting of solving the same question but initially assigning some extreme tones to a subset of vertices. We  name it $\{0,1\}$-Restricted maximum contrast on graphs, $\{0,1\}$-\textsc{rmacg} problem in short. We have provided a bound set of values of a solution for the $\{0,1\}$-\textsc{rmacg} problem  on any  bipartite graph (Theorem \ref{thm:restrictedvalues}).  The particular case of the $\{0,1\}$-\textsc{rmacg} problem on bipartite complete graphs is  solved  (Theorem \ref{thm:Knm}). 
We guess that  the $\{0,1\}$-\textsc{rmacg} problem on any
bipartite graph  is NP-complete. Nevertheless,  we have solved this problem for bipartite graphs and for  particular family of trees   with  additional conditions on the set of vertices initially coloured (Theorems \ref{thm:three} and \ref{contrast-arbol}).
Hence, another improvement of these results like solving the problem  in other families of graphs such that outerplanar graphs or planar graphs would be dealt  in future works.
Besides, another open problem appears if we change  the restriction of initially coloured vertices with  extreme tones $\{0,1\}$ and consider any other grey tones for the incomplete greyscale. 

\vspace*{1cm}
\noindent{\bf Acknowledgements}

\vspace*{0.5cm}
The authors gratefully acknowledge financial support by the Spanish Ministerio de Econom\'{\i}a, Industria y Competitividad and Junta de Andaluc\'ia via grants,  MTM2015-65397-P (M.T. Villar-Li\~n\'an) and  PAI FQM-164, respectively.

%\bibitem{g-mdcia-98} R. P. Grimaldi, \emph{Matemáticas discreta y combinatoria. Una introducción con aplicaciones}.
%Addison Wesley Longman, Mexico (1998).

%\bibitem{fgmnrv-eqtpsp-10} I. Fern\'andez, C. I. Grima, A. M\'arquez, A. Nakamoto, R. Robles and J. Valenzuela.
%\emph{Even and quasi-even triangulations of point sets in the plane}. 26th European Workshop on
%Computational Geometry. Dortmund, Germany. 161-164 (2010).

%\bibitem{ps-evecop-86} A. Proskurowski and M. Syslo, Efficient vertex-and edge-coloring of outerplanar graphs,
%\emph{CIAM Journal on Algebraic and Discrete Methods}, Volumen 7(1), 131-136 (1986).

%\bibitem{co-aagt-93} G. Chartrand and O. R. Oellermann, \emph{Applied and Algorithmic Graph Theory}.
%McGraw-Hill, Singapure (1993).

%\bibitem{s-idmcgtm-90} S. Skiena, \emph{Implementing Discrete Mathematics: Combinatorics and Graph Theory with Mathematica}.
%Addison-Wesley (1990).
%\end{thebibliography}

\end{document}